\def\url@leostyle{%
   \@ifundefined{selectfont}{\def\UrlFont{\sf}}{\def\UrlFont{\small\ttfamily}}}
\newtheorem{thm}{Theorem}[section]
\newtheorem{cor}[thm]{Corollary}
\newtheorem{prop}[thm]{Proposition}
\newtheorem{lem}[thm]{Lemma}
\newcommand{\Dc}{\mathcal D}
\newcommand{\Fc}{\mathcal F}
\newcommand{\Gc}{\mathcal G}
\newcommand{\Hc}{\mathcal H}
\newcommand{\Lc}{\mathcal L}
\newcommand{\Nc}{\mathcal N}
\newcommand{\Pc}{\mathcal P}
\newcommand{\Rc}{\mathcal R}
\newcommand{\Sc}{\mathcal S}
\newcommand{\Vc}{\mathcal V}
\newcommand{\Zc}{\mathcal Z}
\newcommand{\Db}{\mathbb{D}}
\newcommand{\Nb}{\mathbb{N}}
\newcommand{\Rb}{\mathbb{R}}
\newcommand{\Zb}{\mathbb{Z}}
\newcommand{\nn}[1]{| #1 |}
\newcommand{\nd}[1]{\left\|#1 \right\|}
\newcommand{\tr}{\mbox{\rm tr}  }
\newcommand{\id}{\mbox{\rm Id} }
\newcommand{\radon}{\Rc }
\newcommand{\restrict}{\mbox{\rm Rest}\, }
\newcommand{\radial}{\mbox{\rm Radial}\, }
\newcommand{\im}{\mbox{\rm im}\, }
\newcommand{\nf}{N_{3,2} }
\newcommand{\ncf}{\Nc_{3,2}}
\date{\today}
\title[Gelfand transform of ${\Sc(N_{3,2})}^{SO(3)}$]
{Gelfand transforms\\
of $SO(3)$-invariant Schwartz functions\\
on the free nilpotent group~$N_{3,2}$}
\author[V.~Fischer and F.~ Ricci]
{V\'eronique Fischer and Fulvio Ricci}
\address
{Scuola Normale Superiore di Pisa,\\
Piazza dei Cavalieri, 7\\ I-56126 Pisa}\email{v.fischer@sns.it, f.ricci@sns.it}
\begin{document}

\maketitle

\begin{abstract}
The spectrum of a Gelfand pair $(K\ltimes N, K)$,
where $N$ is a nilpotent group,
can be embedded in a Euclidean space.
We prove that in general,
the Schwartz functions on the spectrum 
are 
the Gelfand transforms of Schwartz
$K$-invariant functions on $N$.
We also show the converse in the case of the Gelfand pair
$(SO(3)\ltimes  \nf, SO(3))$,
where $\nf$ is the free two-step
nilpotent Lie group with three generators.
This extends recent results for the Heisenberg group.
\end{abstract}

\section{Introduction}

Let $N$ be a connected, simply-connected, two-step nilpotent Lie group.
Let $K$ be a compact group acting by automorphism on $N$.
We assume that $(K\ltimes N,K)$ is a Gelfand pair.
The Gelfand spectrum can be homeomorphically embedded
in a Euclidean space as follows.

Let ${\Db(N)}^K$ be the algebra 
of left-invariant and $K$-invariant differential operators on $N$ 
and $\{D_1,\ldots, D_q\}$ a finite set of essentially self-adjoint generators of ${\Db(N)}^K$. 
We call $\Dc$ the ordered family $(D_1,\ldots,D_q)$.
To each bounded $K$-spherical function $\phi$ on $N$
we assign the $q$-tuples of eigenvalues 
$\mu(\phi)=(\mu_1(\phi),\ldots, \mu_q(\phi))$,
i.e. such that $D_j\phi=\mu_j(\phi)\phi$.
The set $\Sigma_\Dc$ of such $q$-tuples is in 1-1 correspondence with
the Gelfand spectrum and the topology induced on it from $\Rb^q$
coincides with the Gelfand topology 
\cite{ferrari}.

We define the Gelfand transform
$\Gc : {L^1(N)}^K \rightarrow  C_o(\Sigma_\Dc)$
by:
$$
\Gc F(\mu(\phi))=\int_N F \bar \phi
.
$$
We are interested in the following conjecture:
\begin{center}
  \textit{$\Gc$ establishes an isomorphism between 
${\Sc(N)}^K$
and $\Sc(\Sigma_\Dc)$\\
(as Fr\'echet spaces)}
\end{center}
The validity of this statement is independent of the choice of~$\Dc$
(see Section~\ref{sec_results});
therefore once proved for one
particular choice of~$\Dc$, it is true for any choice of~$\Dc$.

Proposition~\ref{prop_gen_hula} of this paper shows 
the continuous inclusion $\Sc(\Sigma_\Dc) \hookrightarrow \Gc ({\Sc(N)}^K)$. 
This property is already known for the case of the Heisenberg group
\cite[Theorem 5.5]{astengo_diblasio_ricci_08}.
The proof  relies on a generalisation 
\cite[Theorem 5.2]{astengo_diblasio_ricci_08}
of Hulanicki's Schwartz kernel Theorem
\cite{hulanicki_84}.

The converse inclusion has been recently shown 
for any Heisenberg Gelfand pair
\cite{astengo_diblasio_ricci_08}
and we prove it here
for $(SO(3)\ltimes \nf,SO(3))$ where $\nf$ is the free two-step nilpotent Lie
group with three generators;
we realise $\nf$ as $\Rb_x^3\times\Rb_y^3$,
$\{0\}\times \Rb_y^3$ being the centre.
It is known that $(SO(3)\ltimes \nf,SO(3))$ is a Gelfand pair
\cite[Theorem 5.12]{benson_jenkins_ratcliff_1990}.
We will give explicit formulae for a family of three essentially self-adjoint
operators $\Dc$ 
that generate ${\Db(\nf)}^{SO(3)}$,
the bounded spherical functions and their corresponding eigenvalues
for $\Dc$.

Our goal here is to prove that 
for any Schwartz $SO(3)$-invariant function $F\in {\Sc(\nf)}^{SO(3)}$,
there exists a Schwartz extension 
of its Gelfand transform:
$$
\mbox{i.e.}\quad
 \exists f\in \Sc(\Rb^3)
\qquad
f_{|\Sigma_{\Dc}}=\Gc F
.
$$

In the proof, we will use the known result for the three-dimensional
Heisenberg group $H_1$.
For this, 
let us consider $N'$, 
the quotient group of $\nf$ by the central subgroup~$\Rb^2_{(y_1,y_2)}$,
and $K'$,
the stabiliser of $\Rb^2_{(y_1,y_2)}$ in $SO(3)$.
We will see that
$N'$ is isomorphic to $H_1\times \Rb_{x_3}$,
and $K'$ is  isomorphic to $U_1\rtimes \Zb_2$
(see Section~\ref{sec_Gspec_N_N'}).
The Gelfand transform for the pair $(K'\ltimes N',K')$ will be denoted by $\Gc'$.

Whenever it makes sense,
we denote by~$\radon F$ the function on~$N'$ 
given by integration of a function $F$ of~$\nf$ 
on the central subgroup
$\Rb^2_{(y_1,y_2)}$.
The operator $\radon$ maps 
$SO(3)$-invariant functions on~$\nf$ to $K'$-invariant functions on~$N'$,
and Schwartz functions on~$\nf$ to Schwartz functions on~$N'$.
$\radon$ is 1-1, but does not send ${\Sc(\nf)}^{SO(3)}$ onto ${\Sc(N')}^{K'}$
(see Proposition~\ref{prop_im_radon}).
The definition of $\radon$ can be extended to left-invariant differential operators
in such a way that $\radon (D F)= (\radon D) (\radon F)$ 
for any left-invariant differential operators~$D$ 
and any smooth compactly supported functions~$F$ on~$N$.
We will see that 
the image of the operators in~$\Dc$ by~$\radon$ 
completed with~$-\partial_{x_3}^2$ 
gives a family~$\Dc'$ of essentially self-adjoint generators for~${\Db(N')}^{K'}$.
Again we will give explicit formulae for~$\Dc'$, 
the bounded spherical functions, 
and their corresponding eigenvalues for~$\Dc'$.

The spectrum of $(K'\ltimes N',K')$ can be projected 
onto the spectrum of $(SO(3)\ltimes \nf,SO(3))$
in the following sense:
composing an homomorphism of ${L^1(N')}^{K'}$ with $\radon$
provides a mapping 
$\Pi : \Sigma_{\Dc'} \rightarrow  \Sigma_\Dc$
between the two spectra, that is completely explicit here.
In fact  $\Pi$ maps continuously $\Sigma_{\Dc'}$ onto $\Sigma_\Dc$,
but is 1-1 only on the regular part of the spectrum
(see Section~\ref{sec_Gspec_N_N'}). 

For any Schwartz $SO(3)$-invariant function $F\in {\Sc(\nf)}^{SO(3)}$,
we have:
$$
\Gc' (\radon F) = \Gc F \circ \Pi 
.
$$
The existence of a Schwartz extension to $\Rb^4$ for $\Gc' (\radon F)$,
can be deduced easily from the Heisenberg case
\cite{astengo_diblasio_ricci_07,astengo_diblasio_ricci_08};
it does not imply directly the existence of a Schwartz extension for
$\Gc F$
 but is constantly used all along the proof.

This article is organised as follows.
In Section~\ref{sec_Gspec_N_N'}, 
we introduce the notations and the basic facts concerning
the Gelfand spectra of $(SO(3)\ltimes \nf,SO(3))$ and $(K'\ltimes N',K')$.
In Section~\ref{sec_results}, we give some general settings and the precise statements of our results.
In Section 4 we describe $\radon$ and the restriction mappings.
In Section 5 we give the proof of Theorem 5
using an extension of a mean value formula due to Geller 
in the case of the Heisenberg group \cite{geller_80}.
In the appendix, 
we give, for completeness, detailed proof of some results  appearing
in this paper and concerning differential operators and functional
calculus on them.

\section{The Gelfand spectra of $(SO(3)\ltimes \nf,SO(3))$ and
  $(K'\ltimes N',K')$}
\label{sec_Gspec_N_N'}

We realise $\nf$ as $\Rb_x^3\times\Rb_y^3$ endowed with the law:
$$
(x,y).(x',y')=(x+x',y+y'+\frac12 x\wedge x')
,
$$
where $\wedge$ indicates the usual wedge product in $\Rb^3$.
$\ncf$ denotes its Lie algebra.
For $j=1,2,3$, let $X_j$ be the left-invariant vector field on $N$ that equals
$\partial_{x_j}$ at 0,
and $Y_j$ the left-invariant vector field on $N$ that equals
$\partial_{y_j}$.
${(X_j)}_{j=1,2,3} $ and $ {(Y_j)}_{j=1,2,3}$ form the canonical basis of
$\ncf$,
and satisfy:
$$
[X_1,X_2]=Y_3
\quad,\quad
[X_3,X_1]=Y_2
\quad,\quad
[X_2,X_3]=Y_1
.
$$

The group~$SO(3)$ acts on~$\Rb^3$ and thus on~$\nf$
by acting simultanously on each copy of $\Rb^3$. 
One checks easily 
that this action is by automorphisms on $\nf$.
$(SO(3)\ltimes \nf,SO(3))$ is a Gelfand pair 
\cite[Theorem 5.12]{benson_jenkins_ratcliff_1990}.

Let us define the sub-Laplacian $L$,
the central Laplacian $\Delta$ and a third operator $D$ by:
$$
L=-\sum_{j=1}^3 X_j^2
\quad,\quad
\Delta=-\sum_{j=1}^3 Y_j^2 
\quad,\quad
D= -\sum_{j=1}^3 X_j Y_j
.
$$
In Section~\ref{sec_results},
we will show that these operators form a family
 $\Dc=(L,\Delta,D)$
of essentially self-adjoint operators that generate
${\Db(\nf)}^{SO(3)}$.

The bounded spherical functions 
and their corresponding eigenvalues
for $\Dc$ are known explicitly.
Let us define some notation first:
for any vector
$x=(x_1,x_2,x_3)\in\Rb^3$, 
we will write $\tilde x$ or $x\,\tilde{}$ for $(x_1,x_2)$ and, occasionally, ${[x]}_3$ for $x_3$.
$\Lc_l(u)=(1/l!)e^{u/2}(d/du)^lu^le^{-u}$ denotes the $l$-Laguerre function of order 0 on $\Rb$.
Then the bounded spherical functions on $N_{3,2}$ are:
$$
\phi_{\lambda,l,r}(x,y)
=\int_{k\in SO(3)}
e^{-i\lambda {[k.y]}_3}
\Lc_l\Big(\frac \lambda 2 \nn{ [k.x]\tilde{\,}}^2\Big)
e^{-ir{[k.x]}_3}
dk
\ ,\quad
\lambda>0,\, l\in\Nb,\,r\in\Rb.
$$
and
$$
\phi_{0,R}(x,y)=\int_{k\in SO(3)}e^{-iR{[k.x]}_3}\,dk
= \frac {\sin (R \nn{x})}{R \nn{x}}
dk
\ ,\quad
R\geq0;
$$
their eigenvalues for $\Dc$  are given by:
 \begin{eqnarray*}
\mu_{\phi_{\lambda,l,r}}
  &=&
  \left(\lambda(2l+1) +r^2,\lambda^2,\lambda r\right)
  ,\\
  \mu_{\phi_{0,R}}
  &=&
  \left(R^2, 0,0\right)
  .
\end{eqnarray*}
The Gelfand spectrum $\Sigma_\Dc$ of $(SO(3)\ltimes \nf,SO(3))$ is then realised as the union 
of the collection of the 
$\mu_{\phi_{\lambda,l,r}}$, $\lambda>0,\, l\in\Nb,\,r\in\Rb$
(the regular part of the spectrum),
with the collection of the $\mu_{\phi_R}$, $R\in\Rb$
(the singular part of the spectrum).
Calling $(\eta_1,\eta_2,\eta_3)$ 
the coordinates corresponding to $L,\Delta,D$ respectively, 
$\Sigma_\Dc$ is then the union, 
for $l\geq0$, of the surfaces $\Gamma_l$ 
defined by the equation
$\eta_3^2=\eta_2\big(\eta_1-(2l+1)\sqrt{\eta_2}\big)$. 
They all meet together on the positive $\eta_2$-axis, 
the singular part of $\Sigma_\Dc$.

$N'$ is the quotient group of $\nf$ by the central subgroup
$\Rb^2_{(y_1,y_2)}$;
we realise $N'$ as $\Rb^3_x\times\Rb_t$
endowed with the law:
$$
(x,t).(x',t')=(x+x',t+t'+\frac12 {[x\wedge x']}_3)
.
$$
$\Nc'$ denotes its Lie algebra; this is a quotient of $\ncf$ by $\Rb
Y_1\oplus\Rb Y_2$
and we denote $q:\ncf\rightarrow \Nc'$ the quotient mapping.
$q(X_j)=X'_j$ is the left-invariant vector field $X'_j$ on $N'$ that equals
$\partial_{x_j}$ at 0,  $j=1,2,3$;
$q(Y_1)=q(Y_2)=0$, 
and $q(Y_3)=T$ is the left-invariant vector field on $N$ that equals
$\partial_t$. In particular $X'_3=\partial_{x_3}$ lies in the centre of $\Nc'$.
${(X'_j)}_{j=1,2,3} $ and $T$ form the canonical basis of $\Nc'$.
It is easy to see that $N'$ is isomorphic to $H_1\times \Rb$.
Let $K'$ be the stabiliser of $\Rb^2_{(y_1,y_2)} \subset \Rb_y^3$ in $SO(3)$.
The group$K'$ is 
$S\big(O(2)\times O(1)\big)\cong O(2)\cong U_1\rtimes \Zb_2$.

$(K'\ltimes N',K')$ is a Gelfand pair 
and
its bounded spherical functions are explicitly known:
$$
\phi_{\lambda,l,r}'(x,t)
=
\cos(\lambda t +r x_{x_3})
\Lc_l(\frac \lambda 2 \nn{ {[k.x]}\tilde {\,}}^2)
dk
\quad,\quad
\lambda>0, l\in\Nb, r\in\Rb
,
$$
and
$$
\phi'_{\zeta,r}(x,y)
= J_o(\zeta \nn{\tilde x})\cos(r x_3)
\quad,\quad
\zeta,r\in\Rb
,
$$
$J_o$ being the Bessel function of order 0.

We define the following operators:
$$
L'=-\sum_{j=1}^3 {X'_j}^2
\quad,\quad
\Delta'= -T^2
\quad,\quad
D'= -X'_3 T
.
$$
The operators
$L'$, $\Delta'$, $D'$ and $-{X'_3}^2$
are $K'$-invariant, essentially self-adjoint 
and generate ${\Db(N')}^{K'}$
(see Proposition~\ref{prop_ess_selfadj}
and Subsection~\ref{subsec_N'}).
We set the family
$\Dc'=(L',\Delta', D', -{X'_3}^2)$.

The eigenvalues of the bounded spherical functions for~$\Dc'$  are given by:
 \begin{eqnarray*}
\mu_{\phi'_{\lambda,l,r}}
  &=&
  \left(\lambda(2l+1) +r^2,\lambda^2,\lambda r,r^2\right)
,\\
  \mu_{\phi'_{\zeta,r}}
  &=&
  \left(\zeta^2+r^2, 0,0, r^2\right)
.
\end{eqnarray*}
As in the case of $(SO(3)\ltimes\nf,SO(3))$,
the Gelfand spectrum $\Sigma'_{\Dc'}$ of $(K'\ltimes N',K')$ is then
realised as the union 
of a regular and a singular part:
the regular part is
the collection of the 
$\mu_{\phi'_{\lambda,l,r}}$, $\lambda>0,\, l\in\Nb,\,r\in\Rb$,
and the singular part is 
the collection of the $\mu_{\phi'_{\zeta,r}}$, $\zeta,r\in\Rb$.

With coordinates $(\eta_1,\eta_2,\eta_3,\eta_4)$ 
corresponding to $L',\Delta',D',-{X'_3}^2$ respectively, 
$\Sigma'_{\Dc'}$ is the union of the set
$\{(\eta_1,0,0,\eta_4):0\le\eta_4\le\eta_1\}$ (the singular set) 
and  the two-dimensional surfaces $\Gamma'_l$, $l\ge0$, 
defined by the system of equations 
$$
\left\{\begin{matrix}
\eta_3^2
=
\eta_2\big(\eta_1-(2l+1)\sqrt{\eta_2}\big)\\ 
\eta_3^2
=
\eta_2\eta_4\ .\hfill 
\end{matrix}\right.
$$

Notice that the projection onto the hyperplane $\eta_4=0$ 
parallel to the $\eta_4$-axis maps $\Sigma'_{\Dc'}$ onto $\Sigma_\Dc$, 
and is bijective between the two regular sets. 
This fact, alluded to already in the introduction, 
will be relevant in view of the mapping $\radon$ defined in Subsection 4.1.

Let us give an equivalent and intrinsic point of view of this fact.
As explained in the introduction,
the spectrum of $(K'\ltimes N',K')$ can be projected in the following sense 
onto the spectrum of $(SO(3)\ltimes \nf,SO(3))$:
the composition of an homomorphism of ${L^1(N')}^{K'}$ with $\radon$
(see also Subsection~\ref{subsec_radon})
provides a mapping 
between the two Gelfand spectra.
Realising the Gelfand spectra as explained in the introduction
(see also Section~\ref{sec_results}),
this mapping
$\Pi : \Sigma_{\Dc'} \rightarrow  \Sigma_\Dc$
is then given by:
\begin{eqnarray*}
   \Pi \left(\lambda(2l+1)+r^2,\lambda^2,\lambda r,r^2\right) 
& = &
  \left(\lambda(2l+1)+r^2,\lambda^2,\lambda r\right)
,
  \\
\Pi  \left(\zeta^2+r^2,0,0,r^2\right)
& = &
  \left(\zeta^2+r^2,0,0\right)  
.  
\end{eqnarray*}
$\Pi$ maps continuously
$\Sigma_{\Dc'}$ onto $\Sigma_\Dc$.
Moreover
$\Pi$ maps homeomorphically
the regular part of $\Sigma_{\Dc'}$ onto the regular part of $\Sigma_\Dc$;
$\Pi$ maps  the irregular part of $\Sigma_{\Dc'}$ onto the
irregular part of $\Sigma_\Dc$, but this correspondence is not 1-1. 

\section{Results}
\label{sec_results}

In this section, we describe the general settings of our work 
and explain the conjecture $\Gc ({\Sc(N)}^K)=\Sc(\Sigma_\Dc)$.
We will give the precise statement of our main result in Theorem~\ref{thm_goal1}.

Let $N$ be a connected, simply-connected Lie group,
$\Nc$ its Lie algebra,
$\exp:\Nc\rightarrow N$ the exponential mapping 
and
${(E_i)}_{i=1}^p$ a basis of~$\Nc$.
The canonical basis ${(E_i)}_{i=1}^p$ of~$\Nc$ being chosen, 
this induces a Lebesgue measure $dX$ on $\Nc$ 
and, via the exponential map, a Haar measure $dn$ on $N$;
the spaces $L^p(N)$ are defined with respect to this Haar measure.
When $N$ is a graded Lie group,
following~\cite[ch1.D]{folland_stein_bk},
we fix a homogeneous gauge $\nn{.}$ on $N$ and 
we keep the same notation for the basis $(E_j)$ of $\Nc$ 
and the associated left-invariant vector fields on $N$;
we set the following family of semi-norms 
parametrised by $a\in \Nb$
on the Schwartz space $\Sc(N)$ 
which induces the usual Fr\'echet space structure on~$\Sc(N)$:
$$
\nd{F}_{a,N}
=
\sup_{n\in N,  d(I) \leq a}
{\left(1+\nn{n}\right)}^a \nn{E^I F(n)}
.
$$

$\Pc(\Nc)$ denotes the algebra of polynomials on~$\Nc$ with real coefficients
where $\Nc$ is then identified with the Euclidean vector space~$\Rb^p$.
$\Db(N)$ denotes the algebra of real left-invariant differential operators on~$N$,
as operators defined on $C_c^\infty(N)$,
the space of smooth, compactly-supported functions on~$N$.

To $P\in \Pc(\Nc)$, 
we associate $D_P\in \Db(N)$ by:
$$
D_P F(n)={\left[P(i^{-1}\partial_u)F(n\exp (\sum_{j=1}^p u_j E_j))\right]}_{|u=0}
.
$$
We obtain the symmetrisation mapping
$P\mapsto D_P$,
that is a linear isomorphism 
between the algebras~$\Db(N)$ and~$\Pc(\Nc)$  
\cite[Ch.II Theorems 4.3 and 4.9]{helgason_bk2}

In the appendix we show:
\begin{prop}
\label{prop_ess_selfadj}
If $(K\ltimes N,K)$ is a Gelfand pair,
each operator of~${\Db(N)}^K$ is essentially self-adjoint, 
that is, 
it admits a unique self-adjoint extension to an unbounded operator of~$L^2(N)$.

Furthermore the operators of~${\Db(N)}^K$ commute strongly, 
in the sense that 
the spectral resolutions of their self-adjoint extensions commute.
 \end{prop}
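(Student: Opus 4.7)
The plan is to proceed in three stages: (1) establish symmetry and algebraic commutativity of ${\Db(N)}^K$, (2) obtain essential self-adjointness via analytic vectors for a sub-Laplacian, and (3) upgrade algebraic commutativity to strong commutativity via the commutative convolution algebra ${L^1(N)}^K$. First I would fix $D\in{\Db(N)}^K$ and, by the symmetrization isomorphism, write $D=D_P$ for a real-valued $P\in\Pc(\Nc)^K$. Since $N$ is nilpotent and hence unimodular, each left-invariant vector field $E_j$ satisfies $E_j^*=-E_j$ on $C_c^\infty(N)$, and combined with the factor $i^{-1}$ in the definition of $D_P$ this forces $D_P$ to be formally symmetric on $C_c^\infty(N)$. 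For algebraic commutativity I would observe that each $D\in{\Db(N)}^K$ acts as right convolution by a $K$-invariant distribution $\mu_D$ supported at the identity; approximating $\mu_D$ by $K$-invariant smooth compactly supported functions via a $K$-invariant approximate identity built from the heat kernel of a $K$-invariant sub-Laplacian, the commutativity of ${L^1(N)}^K$ given by the Gelfand pair hypothesis passes to the limit and yields $[D_1,D_2]=0$ on $\Sc(N)$ for all $D_1,D_2\in{\Db(N)}^K$.

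Next, I would equip $\Nc$ with a $K$-invariant inner product and consider the associated sub-Laplacian $L=-\sum_j E_j^2\in{\Db(N)}^K$. Classically, $L$ is essentially self-adjoint on $C_c^\infty(N)$ and its heat semigroup $e^{-tL}$ is right convolution by a $K$-invariant Schwartz kernel $h_t$. The functions $e^{-tL}f$ for $f\in L^2(N)$, $t>0$, lie in $\Sc(N)$ and form a dense set of analytic vectors for $L$. Since any $D\in{\Db(N)}^K$ commutes with $L$ by the previous stage, and hence with $e^{-tL}$, it preserves this subspace; combining polynomial-in-$L$ bounds on $D$ with Nelson's analytic vector theorem then yields essential self-adjointness of $D$ on $\Sc(N)$.

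Finally, for strong commutativity, I would note that for $D\in{\Db(N)}^K$ and $\lambda\in\Cb\setminus\Rb$ the resolvent $(\bar D-\lambda)^{-1}$ is a bounded operator on $L^2(N)$ commuting with left translations and with the $K$-action, hence given by right convolution by a $K$-bi-invariant distribution. Smoothing by $e^{-tL}$ turns these into $K$-invariant $L^1$ functions, which commute under convolution by the Gelfand pair hypothesis; passing to the limit $t\to 0$ in the strong operator topology shows that the resolvents themselves commute, and by standard functional calculus this upgrades to strong commutativity of the self-adjoint extensions. The main obstacle I foresee is the analytic-vector step: one must show uniformly across ${\Db(N)}^K$ that $\Sc(N)$ consists of analytic vectors, which requires quantitative control, via heat-kernel estimates on $N$, of $\nd{D^kf}_{L^2(N)}$ in terms of powers of $L$ applied to $f$.
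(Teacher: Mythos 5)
Your proposal takes a genuinely different route from the paper, and the central step has a gap that I do not think can be closed along the lines you suggest.

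The paper does not use analytic vectors at all. It relies on the structure theory of Gelfand pairs $(K\ltimes N,K)$ from Benson--Jenkins--Ratcliff: for each $\pi\in\hat N$ the space $\Hc_\pi$ decomposes under the stabiliser $K_\pi$ into finite-dimensional irreducible pieces $\Hc_{\pi,\alpha}$, and for every $D\in{\Db(N)}^K$ the operator $d\pi(D)$ acts as a scalar $\mu_{\pi,\alpha,D}$ on each $\Hc_{\pi,\alpha}$. The self-adjoint extension and the spectral measure $E_D(\omega)$ are then written down explicitly on the Plancherel side via $\pi\big(E_D(\omega)F\big)=\sum_\alpha\chi_\omega(\mu_{\pi,\alpha,D})\,\pi(F)\Pi_{\pi,\alpha}$, and strong commutativity is immediate because the projections $\Pi_{\pi,\alpha}$ are the same for all $D$. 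This scalar-on-finite-dimensional-pieces phenomenon is exactly what makes essential self-adjointness and strong commutativity come for free; your argument never invokes it.

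The gap in your second stage is quantitative, and it is the one you yourself flag. The functions $e^{-tL}f$ are analytic vectors for $L$, but an operator $D\in{\Db(N)}^K$ of homogeneous degree $d$ satisfies, at best, a bound of the form $\nd{D^k f}_2\le C^k\nd{L^{dk/2}f}_2$. Combined with the operator-norm estimate $\nd{L^m e^{-tL}}\lesssim m!/t^m$ this yields $\nd{D^k e^{-tL}f}_2\lesssim C^k(dk/2)!/t^{dk/2}$, which is $O(C^k\,k!)$ only when $d\le 2$. For $d>2$ the vectors $e^{-tL}f$ are merely Gevrey of order $d/2>1$ for $D$, so Nelson's analytic vector theorem does not apply. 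The semi-analytic/quasi-analytic/Stieltjes vector refinements (Nussbaum, Masson--McClary) require $D$ to be semibounded, which is false for typical elements of ${\Db(N)}^K$ --- already in the paper's main example, $D=-\sum_j X_jY_j$ is not semibounded. So the "polynomial-in-$L$ bounds plus Nelson" route breaks precisely where you predicted trouble, and I do not see how to patch it without importing the representation-theoretic finite-dimensionality.

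Your third stage also leans on an unproved claim: you assert that smoothing the resolvent $(\bar D-\lambda)^{-1}$ by $e^{-tL}$ produces a kernel in $L^1(N)^K$. Boundedness on $L^2$ does not give an $L^1$ kernel, and the resolvent kernel is a priori only a tempered $K$-invariant distribution. Establishing that $e^{-tL}(\bar D-\lambda)^{-1}$ has an $L^1$ (indeed Schwartz) kernel is essentially the content of Hulanicki-type theorems, which in this paper appear as Proposition~\ref{prop_gen_hula} and are proved \emph{after} Proposition~\ref{prop_ess_selfadj}; using them here would be circular. The paper's proof of strong commutativity is instead a one-line verification on the Plancherel side, again exploiting the common decomposition into $\Hc_{\pi,\alpha}$.

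In short: your symmetry computation and the observation that ${\Db(N)}^K$ is algebraically commutative are fine, but the essential self-adjointness step is not established, and the route you propose does not obviously lead there. The representation-theoretic decomposition is not an optional convenience in this problem; it is what makes the proposition true in a tractable way.
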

We will use the same notation for an operator of~${\Db(N)}^K$
and its self-adjoint extension.

By Hilbert's Basis Theorem,
if a group~$K$ acts orthogonally on some Euclidean space $\Rb^p$, 
the algebra~${\Pc(\Rb^p)}^K$ of $K$-invariant polynomials on~$\Rb^p$ 
is finitely generated 
\cite[Ch.II Corollary 4.10]{helgason_bk2}.
If $\rho_1,\ldots,\rho_q$ is a set of generators,
we call $\{\rho_1,\ldots,\rho_q\}$ a Hilbert basis for~$(\Rb^p,K)$
and $\rho=(\rho_1,\ldots,\rho_q)$ the corresponding Hilbert mapping.
Furthermore,
if $\rho=(\rho_1,\ldots,\rho_q)$ and $\rho'=(\rho'_1,\ldots,\rho'_{q'})$ 
are two Hilbert mappings for $(\Rb^p,K)$,
then there exists $Q=(Q_1,\ldots,Q_q)$,
$Q_j\in \Pc(\Rb^{q'})$,
such that $\rho = Q \circ \rho'$ (and viceversa).
We will make extensive use of G.~Schwarz's Theorem \cite{schwarz_1975}:
every $K$-invariant smooth function on $\Rb^p$
can be expressed as a smooth function of a Hilbert basis $\rho$ of
$(\Rb^p,K)$.
In other words, 
the Hilbert map, $\rho$, induces  
an application
given by $\rho^*(h)=h\circ \rho$.
Moreover $\rho^*$ is a linear continuous mapping
from $C^\infty(\Rb^q)$ onto ${C^\infty(\Rb^p)}^K$,
and also from $\Sc(\Rb^q)$ onto ${\Sc(\Rb^p)}^K$
\cite[Theorem 6.1]{astengo_diblasio_ricci_08}.

Assume that $(K\ltimes N,K)$ is a Gelfand pair.
Any family of generators of~${\Db(N)}^K$ is obtained as the symmetrisation of a Hilbert basis,
and conversely, 
if $\{\rho_1,\ldots,\rho_q\}$ denotes a Hilbert basis for $(\Nc,K)$,
then $\{D_{\rho_1},\ldots,D_{\rho_q}\}$ is a set of generators of ${\Db(N)}^K$.

Let us fix $(\rho_1,\ldots,\rho_q)$ an ordered Hilbert basis
for~$(\Nc,K)$, 
to which we associate the ordered family of operators
$\Dc_\rho=(D_{\rho_1},\ldots,D_{\rho_q})$.
We denote by~$\Sigma_{\Dc_\rho}$,
the set of the $q$-tuples of eigenvalues 
$\mu(\phi)=(\mu_1(\phi),\ldots, \mu_q(\phi))$
of~$\Dc_\rho$ 
for the bounded $K$-spherical functions $\phi$ on $N$.
As mentioned in the introduction and proved in~\cite{ferrari},
$\Sigma_{\Dc_\rho}$ is 
the realisation of the Gelfand spectrum associated to $\Dc_\rho$,
in the sense that
the set  $\Sigma_{\Dc_\rho}$ of such $q$-tuples is in 1-1 correspondence with
the Gelfand spectrum and the topology induced on it from $\Rb^q$
coincides with the Gelfand topology. 
In the appendix, we will show that $\Sigma_{\Dc_\rho}$
is also the joint spectrum of 
~$\Dc_\rho$:
\begin{prop}
\label{prop_joint_spectrum}
Let $(\rho_1,\ldots,\rho_q)$ be an ordered Hilbert basis for $(\Nc,K)$.
The joint spectrum of the family of strongly commuting, self-adjoint
operators~$\Dc_\rho=(D_{\rho_1},\ldots,D_{\rho_q})$ is~$\Sigma_{\Dc_\rho}$.
\end{prop}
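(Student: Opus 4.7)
The plan is to establish a functional-calculus bridge between the two sets: for every $m\in\Sc(\Rb^q)$, the operator $m(\Dc_\rho)$ should coincide with right-convolution by a function $k_m\in{\Sc(N)}^K$, and the Gelfand transform of $k_m$ should agree with $m$ on $\Sigma_{\Dc_\rho}$, i.e.\ $\Gc(k_m)(\mu)=m(\mu)$ for every $\mu\in\Sigma_{\Dc_\rho}$. Once this identity is in place, both inclusions between $\Sigma_{\Dc_\rho}$ and the joint spectrum of $\Dc_\rho$ follow from standard separation arguments with compactly supported bumps.

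First I would invoke the Hulanicki-type statement already cited in the paper, namely \cite[Theorem 5.2]{astengo_diblasio_ricci_08}, to obtain the continuous linear map $m\mapsto k_m$ from $\Sc(\Rb^q)$ into ${\Sc(N)}^K$ together with the convolution identity $m(\Dc_\rho)f=f*k_m$ on a suitable dense subspace of $L^2(N)$. Combined with the Plancherel picture for the Gelfand pair $(K\ltimes N,K)$, in which convolution by $k_m\in{L^1(N)}^K$ acts as multiplication by $\Gc(k_m)$ while $m(\Dc_\rho)$ acts on the spherical Fourier side as multiplication by $m$, this yields the key identity
$$
\Gc(k_m)(\mu(\phi))=m(\mu(\phi))\qquad\text{for every }\mu(\phi)\in\Sigma_{\Dc_\rho}.
$$
The equality is immediate for polynomial $m$ by iterating $D_{\rho_j}\phi=\mu_j(\phi)\phi$, and then extends to all $m\in\Sc(\Rb^q)$ by the continuity of $m\mapsto k_m$ into ${\Sc(N)}^K$ together with the continuity of both sides at a fixed $\mu(\phi)$.

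Given this key identity, the two inclusions follow directly. If some $\mu^0\in\Sigma_{\Dc_\rho}$ lay outside the joint spectrum, one could pick $m\in C_c^\infty(\Rb^q)$ with $m(\mu^0)=1$ and support disjoint from the joint spectrum; the spectral theorem would give $m(\Dc_\rho)=0$, hence $k_m=0$ and $\Gc(k_m)(\mu^0)=0$, contradicting the key identity. For the other direction, $\Sigma_{\Dc_\rho}$ is closed in $\Rb^q$ as a consequence of Ferrari's homeomorphism with the Gelfand spectrum \cite{ferrari}; given $\mu^0$ in the joint spectrum but outside $\Sigma_{\Dc_\rho}$, pick $m\in C_c^\infty(\Rb^q)$ with $m(\mu^0)\neq0$ and vanishing on an open neighbourhood of $\Sigma_{\Dc_\rho}$. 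Then $\Gc(k_m)\equiv0$ on $\Sigma_{\Dc_\rho}$, and injectivity of the spherical Gelfand transform on ${L^1(N)}^K$ forces $k_m=0$, whence $m(\Dc_\rho)=0$, contradicting $\mu^0$ lying in the joint spectrum.

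The main obstacle is making the key identity rigorous at every point of $\Sigma_{\Dc_\rho}$: although $\phi$ is a formal joint eigenfunction for $\Dc_\rho$, it is bounded but generically not in $L^2(N)$, so one cannot simply feed it into the Hilbert-space spectral calculus of $\Dc_\rho$. The clean way around this is to pair the convolution identity $m(\Dc_\rho)f=f*k_m$ against~$\phi$ through Schwartz functions~$f$ and exploit that $k_m$ is itself Schwartz; this is exactly why a Hulanicki-type statement producing a \emph{Schwartz} (rather than merely bounded or integrable) convolution kernel is crucial. The closedness of $\Sigma_{\Dc_\rho}$ in $\Rb^q$ is a secondary point used only to implement the bump-function separation in the second inclusion.
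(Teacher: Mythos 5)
Your proposal is correct, but it takes a genuinely different route from the paper's. The paper proves the inclusion $\Sigma_{\Dc_\rho}\subset S_{\Dc_\rho}$ directly from the Plancherel decomposition $\Hc_\pi=\oplus_\alpha\Hc_{\pi,\alpha}$ and the fact that $d\pi(D_{\rho_j})$ acts by the scalar $\mu_j(\phi_{\pi,\alpha})$ on $\Hc_{\pi,\alpha}$; it proves the reverse inclusion by explicitly constructing the bounded operators $U_j$ demanded by the definition of the joint spectrum, using a dyadic partition of unity $\sum_a\psi_a=1$ together with the elementary Lemma~\ref{lem_cq_plancherel} (existence of an $L^2$ convolution kernel for a continuous compactly supported multiplier on the Gelfand spectrum, with $\nd{\cdot}_{L^2\to L^2}=\sup\nn{m}$). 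Your proof instead runs both inclusions through bump-function separation and the full Hulanicki-type Schwartz kernel result (the paper's Proposition~\ref{prop_gen_hula}), together with the injectivity of the spherical Gelfand transform on ${L^1(N)}^K$ and the identification of the joint spectrum with the support of the joint spectral measure. Both are valid; yours is conceptually cleaner but invokes heavier machinery, which is slightly awkward given that Proposition~\ref{prop_gen_hula} is only proved later in the paper (there is no actual circularity, as its proof does not use the present proposition, but the paper's more elementary route is presumably chosen precisely to avoid this ordering concern).

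One point in your sketch should be tightened. You establish the key identity $\Gc(k_m)(\mu(\phi))=m(\mu(\phi))$ first for polynomial $m$ (by iterating $D_{\rho_j}\phi=\mu_j(\phi)\phi$) and then ``extend by continuity.'' This does not go through as stated: for polynomial $m$, the kernel $k_m$ is a distribution supported at the identity, not a Schwartz function, so the map $m\mapsto k_m$ cannot be the bridge between polynomials and Schwartz multipliers. The correct route to the key identity, which you essentially gesture at in your final paragraph, is the Plancherel argument: since $m(\Dc_\rho)$ and right-convolution by $k_m$ coincide as operators on $L^2(N)$, and on the spectral side the former acts by $m(\mu(\phi_{\pi,\alpha}))$ while the latter acts by $\Gc k_m(\mu(\phi_{\pi,\alpha}))$, the two functions agree Plancherel-almost everywhere and hence (by continuity and density of the tempered spectrum) everywhere on $\Sigma_{\Dc_\rho}$. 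This is exactly the observation the paper makes at the opening of the proof of Proposition~\ref{prop_gen_hula}, and you should cite that directly rather than reconstruct the density argument.
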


For a closed subset~$E$ of~$\Rb^q$, 
$\Sc(E)$ denotes the space of restrictions to~$E$ of Schwartz functions, 
endowed with the quotient topology of $\Sc(\Rb^q) / \{ f: f_{|E}=0\}$;
we will often define a class in this quotient as 
being given as the restriction of a Schwartz function on $\Rb^q$.
The spectrum $\Sigma_\Dc$ is a closed subset of $\Rb^q$.
We are interested in the conjecture
${\Sc(N)}^K\overset {\Gc}{\sim}\Sc(\Sigma_\Dc)$.
The existence of a polynomial mapping  between two Hilbert mappings
implies that 
the validity of this conjecture is independent of the choice of~$\Dc$
(see \cite[Section~3]{astengo_diblasio_ricci_08}).
The continuous inclusion 
$\Sc(\Sigma_\Dc) \hookrightarrow \Gc ({\Sc(N)}^K)$
relies on the following statement,
which is a generalisation of Hulanicki's Schwartz Kernel Theorem
proved in the appendix:
\begin{prop}
\label{prop_gen_hula}
  Let $(\rho_1,\ldots,\rho_q)$ be an ordered Hilbert basis for~$(\Nc,K)$,
and $\Dc_\rho=(D_{\rho_1},\ldots,D_{\rho_q})$ the associated family of
operators.

Let  $m$ be in~$\Sc(\Rb^q)$.
The operator~$m(\Dc_\rho)$ 
is a convolution operator 
with a~$K$-invariant Schwartz kernel 
$M=M_{m,\Dc_\rho}\in{\Sc(N)}^K$:
$$
\forall F\in L^2(N)
\qquad
m(\Dc_\rho)F= F*M
.
$$
The Gelfand transform of $M$ is:
$$
\Gc M=m_{|\Sigma_{\Dc_\rho}}
.
$$

Furthermore the mapping 
$m\in\Sc(\Rb^q)\mapsto M_{m,\Dc_\rho}\in{\Sc(N)}^K$
is continuous.
\end{prop}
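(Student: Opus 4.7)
The plan is to split the statement into three parts: existence of a tempered-distribution convolution kernel~$M$; Schwartz regularity of~$M$ with continuous dependence on~$m$ (the main difficulty, a Hulanicki-type result); and identification of the Gelfand transform. For the first part, Proposition~\ref{prop_ess_selfadj} gives that $D_{\rho_1},\ldots,D_{\rho_q}$ are essentially self-adjoint and strongly commuting, so joint functional calculus defines a bounded operator $m(\Dc_\rho)$ on $L^2(N)$ for every bounded Borel~$m$. Left-invariance of each $D_{\rho_j}$ transfers to its spectral projectors, and hence to $m(\Dc_\rho)$, so there is a unique tempered distribution~$M$ with $m(\Dc_\rho)F=F*M$; $K$-invariance of the $D_{\rho_j}$'s transfers likewise and gives $K$-invariance of~$M$.

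The main obstacle is upgrading ``tempered distribution'' to ``Schwartz function'' while controlling $m\mapsto M$ continuously. I would follow the strategy of \cite[Thm.~5.2]{astengo_diblasio_ricci_08} and reduce to the single-operator Hulanicki theorem. Choose a positive self-adjoint $L_0\in{\Db(N)}^K$, for instance the sub-Laplacian of~$N$ or a suitable sum $\sum_j D_{\rho_j}^{2k_j}$, whose symbol $p(\eta)$ is polynomial and proper on~$\Sigma_{\Dc_\rho}$ in the sense that $p(\mu(\phi))$ dominates $|\mu(\phi)|$ up to a polynomial power. Given $m\in\Sc(\Rb^q)$, for each integer~$N$ write $m(\eta)=(1+p(\eta))^{-N}\tilde m_N(\eta)$ with $\tilde m_N$ bounded and with seminorms controlled by those of~$m$. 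Classical Hulanicki applied to~$L_0$ yields that $(1+L_0)^{-N}$ is convolution with a Schwartz kernel~$\Psi_N$, so $M=\tilde M_N*\Psi_N$ with $\tilde M_N$ a bounded $K$-invariant kernel; iterating in~$N$ transfers arbitrary Schwartz decay of~$\Psi_N$ onto~$M$. The delicate point is extracting quantitative seminorm bounds in terms of Schwartz seminorms of~$m$ on the possibly singular set~$\Sigma_{\Dc_\rho}$, which is where G.~Schwarz's theorem and the definition of $\Sc(\Sigma_{\Dc_\rho})$ as a quotient space enter.

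For the Gelfand transform identity, fix a bounded $K$-spherical~$\phi$ with eigenvalue tuple~$\mu(\phi)$. Since $D_{\rho_j}\phi=\mu_j(\phi)\phi$, joint functional calculus gives $m(\Dc_\rho)\phi=m(\mu(\phi))\phi$ (interpreted distributionally against test functions, as $\phi\notin L^2(N)$ in general). Convolution with~$M$ on the other hand gives $m(\Dc_\rho)\phi=\phi*M$, so $\phi*M=m(\mu(\phi))\phi$; evaluating at the identity of~$N$ and using $\phi(n^{-1})=\bar\phi(n)$ yields $m(\mu(\phi))=\int_N\bar\phi\,M=\Gc M(\mu(\phi))$. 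Continuity of $m\mapsto M$ is essentially built into the seminorm estimates of the previous step; alternatively it follows abstractly from the closed graph theorem applied to the Fr\'echet spaces $\Sc(\Rb^q)$ and ${\Sc(N)}^K$, using that $m_n\to m$ in $\Sc(\Rb^q)$ forces $m_n(\Dc_\rho)\to m(\Dc_\rho)$ strongly on $L^2(N)$ and hence $M_n\to M$ as tempered distributions.
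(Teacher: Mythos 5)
The first part (existence of a tempered-distribution kernel, $K$-invariance) and the third part (the Gelfand transform identity, which you derive from $\phi*M=m(\mu(\phi))\phi$; the paper derives it from the Plancherel formula) are essentially fine. The genuine gap is in the central step, the upgrade to Schwartz regularity. You write $m(\eta)=(1+p(\eta))^{-N}\tilde m_N(\eta)$ with $\tilde m_N$ bounded, and claim $M=\tilde M_N*\Psi_N$ with $\Psi_N$ Schwartz (the kernel of $(1+L_0)^{-N}$), hoping to ``transfer'' the decay of $\Psi_N$ onto~$M$. That cannot work: $\tilde M_N$ is merely the kernel of an $L^2$-bounded operator, i.e.\ a tempered distribution with no decay whatsoever, and convolving a Schwartz function with such a distribution need not produce anything with rapid decay (think of $\tilde m_N$ a rapidly oscillating bounded symbol whose kernel spreads). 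Iterating in $N$ does not help, since the same obstruction reappears at every step. And if you instead take $\tilde m_N$ to be Schwartz (which it is, since $m$ is Schwartz and $p$ a polynomial), then you need precisely the theorem you are trying to prove to handle $\tilde m_N(\Dc_\rho)$: the argument is circular.

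The paper's proof does not reduce to a single-operator Hulanicki theorem; the reduction is to the \emph{multi-operator} version \cite[Theorem~5.2]{astengo_diblasio_ricci_08}, which requires a strongly commuting family of \emph{positive Rockland operators}, and the hard work is precisely in getting from the $D_{\rho_j}$ (which are neither positive nor Rockland) to such a family. Concretely the paper: (a) chooses a bi-homogeneous Hilbert basis, splits off the generators of odd $\Vc$-degree, and uses Whitney/Schwarz to write $m(r)=\sum_\epsilon r^\epsilon\,\tilde m_\epsilon(r_1,\dots,r_{q_1},r_{q_1+1}^2,\dots,r_q^2)$; (b) proves Lemma~\ref{lem_folland_stein}, a sub-Laplacian domination estimate, to show that $\tilde D_j=-D_{\rho_j}+c_jD_{\rho_1}^{d_j/2}$ (resp.\ $-D_{\rho_j}^2+c_jD_{\rho_1}^{d_j}$) are positive Rockland; (c) applies the multi-operator Hulanicki theorem to the $\tilde D_j$'s with symbol $\tilde m_\epsilon\circ A^{-1}$, and then applies the differential operators $D_{\rho'}^\epsilon$ to the resulting Schwartz kernels. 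Continuity then follows from the continuity of each intermediate map. Your proposal omits all of (a), (b), (c); the factoring trick you substitute for them is not a valid replacement.
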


For the Gelfand spectra of Heisenberg groups 
or the free two-step nilpotent Lie groups, 
the inclusion of the spectrum in the image of the Hilbert mapping:
$$
\Sigma_{\Dc_\rho}\subset \im \rho
,
$$ 
is true,
independently of the choice of the Hilbert mapping $\rho$
(but we do not know if it is true in general).
Here we will use this property only in the case of $N'=H_1\times\Rb$, 
where it is known, 
the spectrum and the Hilbert mapping being explicit.

\begin{lem}
The polynomials
$\nn{x}^2$, $\nn{y}^2$ and $x \cdot y$ 
generate the algebra of polynomials on $\Rb^3_x\times \Rb^3_y$ 
that are invariant under the simultaneous action of $SO(3)$ on each copy of~$\Rb^3$.
\end{lem}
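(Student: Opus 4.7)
My approach is by induction on the total degree of the invariant polynomial $P\in\Rb[x,y]^{SO(3)}$, where I write $\Rb[x,y]=\Rb[x_1,x_2,x_3,y_1,y_2,y_3]$. The degree-zero case is trivial, so I assume positive degree and the conclusion for all strictly lower degrees.

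The first step is to analyze $P$ on the orthogonal locus $\{x\cdot y=0\}$. Any pair with $x\cdot y=0$ and $x,y\neq 0$ lies in the $SO(3)$-orbit of the standard pair $\bigl((r,0,0),(0,s,0)\bigr)$ with $r=\nn{x}$, $s=\nn{y}$: the positively oriented orthonormal frame $\bigl(x/\nn{x},\,y/\nn{y},\,(x\times y)/(\nn{x}\nn{y})\bigr)$ exhibits the required rotation in $SO(3)$. Writing $f(r,s):=P\bigl((r,0,0),(0,s,0)\bigr)$ and applying the rotations $\mathrm{diag}(-1,1,-1)$ and $\mathrm{diag}(1,-1,-1)$, both elements of $SO(3)$, I find that $f(r,s)=f(-r,s)=f(r,-s)$, so $f(r,s)=g(r^2,s^2)$ for a polynomial $g\in\Rb[u,v]$. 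Combined with the degenerate cases $x=0$ or $y=0$ (handled by spherical symmetry of a single vector), this gives
$$
P(x,y)=g(\nn{x}^2,\nn{y}^2)\qquad\text{on the locus }\{x\cdot y=0\}.
$$

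Set $H:=P-g(\nn{x}^2,\nn{y}^2)\in\Rb[x,y]^{SO(3)}$; this vanishes on $\{x\cdot y=0\}$. I would then show that $x\cdot y$ divides $H$ in $\Rb[x,y]$: viewing $x\cdot y=x_3y_3+(x_1y_1+x_2y_2)$ as a polynomial of degree one in $y_3$ with leading coefficient $x_3$, invertible in the fraction field $\Rb(x_1,x_2,x_3,y_1,y_2)$, I perform polynomial division $H=(x\cdot y)Q+R$ over this field, so $R\in\Rb(x_1,x_2,x_3,y_1,y_2)$. Substituting $y_3=-(x_1y_1+x_2y_2)/x_3$, which places the point on the orthogonal locus whenever $x_3\neq 0$, forces $R$ to vanish on a Zariski-dense subset, whence $R=0$. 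The coefficients $x_3$ and $x_1y_1+x_2y_2$ of $x\cdot y$ in $\Rb[x_1,x_2,x_3,y_1,y_2][y_3]$ are coprime, so $x\cdot y$ is primitive and Gauss's lemma gives $Q=:H'\in\Rb[x,y]$. Since $\Rb[x,y]$ is an integral domain and both $H$ and $x\cdot y$ are $SO(3)$-invariant, $H'$ is $SO(3)$-invariant as well. Finally, $\deg H'=\deg H-2<\deg P$, so the induction hypothesis expresses $H'$ as a polynomial in $\nn{x}^2$, $\nn{y}^2$, $x\cdot y$, and combining yields the desired representation $P=g(\nn{x}^2,\nn{y}^2)+(x\cdot y)H'$.

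The main obstacle is the divisibility step: the transition from vanishing on the real hypersurface $\{x\cdot y=0\}$ to divisibility inside $\Rb[x,y]$. The polynomial-division argument above handles it elementarily; alternatively one could invoke Hilbert's Nullstellensatz together with the irreducibility of $x\cdot y$ and the Zariski-density of its real zero set in its complex zero set, which places $H$ in the prime ideal $(x\cdot y)$.
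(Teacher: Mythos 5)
Your proof is correct, but it takes a genuinely different route from the paper's.

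The paper's argument is a two-line reduction: for any linearly independent pair $(x,y)$, the linear map that equals $-\id$ on $\mathrm{span}(x,y)$ and $+\id$ on the orthogonal complement has determinant $1$, so $P(x,y)=P(-x,-y)$; by density this holds everywhere, hence $P$ is invariant under $-\id_{\Rb^3}$ and therefore under all of $O(3)=SO(3)\cup(-\id)SO(3)$. The paper then quotes the classical first fundamental theorem of invariant theory for $O(n)$ acting diagonally on several copies of $\Rb^n$ (Goodman--Wallach, Theorem 4.2.2), which says exactly that the invariants are generated by the pairwise inner products $\nn{x}^2$, $\nn{y}^2$, $x\cdot y$.

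You instead re-prove the needed piece of that classical theorem from scratch by induction on degree: normalize an orthogonal pair to $((r,0,0),(0,s,0))$ via an oriented frame, use the rotations $\mathrm{diag}(-1,1,-1)$ and $\mathrm{diag}(1,-1,-1)$ to force evenness in $r$ and $s$, subtract off $g(\nn{x}^2,\nn{y}^2)$, and then extract a factor of $x\cdot y$ via division in $y_3$ plus Gauss's lemma (or Zariski density and the Nullstellensatz, as you note). All the steps check out: $\mathrm{diag}(-1,1,-1)$ and $\mathrm{diag}(1,-1,-1)$ do lie in $SO(3)$, the degenerate cases $x=0$ or $y=0$ are correctly reduced to the single-vector case, $x\cdot y$ is indeed primitive and irreducible as a degree-one polynomial in $y_3$, and the quotient $H'$ is $SO(3)$-invariant of strictly lower degree. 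Your approach is longer but entirely self-contained; the paper's is shorter but outsources the heavy lifting to a cited textbook theorem. Both are valid; the paper's observation that $SO(3)$-invariance already implies $O(3)$-invariance in this situation is the slicker insight and is worth knowing, but your inductive argument would be preferable in a context where one cannot or does not want to invoke the FFT for the orthogonal group.
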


\begin{proof}
If $P(x,y)$ is an $SO(3)$-invariant
polynomial on $\Rb^3_x\times \Rb^3_y$,
then for each independent vectors $x,y\in \Rb^3$,
we have $P(x,y)=P(-x,-y)$ 
because the linear transformation 
that equals $-\id$ on the vector space spanned by $x$ and $y$, 
and $1$ on the orthogonal complement line, 
is in $SO(3)$;
this shows that $P$ is invariant under $-\id_{\Rb^3}$,
and thus also under the simultaneous action of $O(3)$ on each copy
of~$\Rb^3$.
This implies:
$$
{\Pc(\Rb_x^3\times\Rb_y^3)}^{SO(3)}
=
{\Pc(\Rb_x^3\times\Rb_y^3)}^{O(3)}
.
$$
By \cite[Theorem~4.2.2.(1)]{goodman_wallach_bk},
${\Pc(\Rb_x^3\times\Rb_y^3)}^{O(3)}$ is spanned 
by $\nn{x}^2$, $\nn{y}^2$ and $x \cdot y$.
\end{proof}

Thus $\rho(x,y)=(\nn{x}^2,\nn{y}^2, x \cdot y)$ 
gives a Hilbert mapping for $(\ncf,SO(3))$. 
We compute easily that the associated family of operators
by symmetrisation
is $\Dc=(L,\Delta,D)$ defined in Section~\ref{sec_Gspec_N_N'},
where we give also an explicit description
of the associated realisation of the Gelfand spectrum.

\begin{thm}
  \label{thm_goal1}
The Gelfand transform of any Schwartz $SO(3)$-invariant function on $\nf$
admits a Schwartz extension to $\Rb^3$: 
$$
\forall F\in {\Sc(\nf)}^{SO(3)}
\qquad
\Gc F\in \Sc(\Sigma_{\Dc})
.
$$
Moreover the mapping 
$F\in {\Sc(\nf)}^{SO(3)}\mapsto \Gc F\in \Sc(\Sigma_{\Dc})$
is an isomorphism of Fr\'echet spaces.
\end{thm}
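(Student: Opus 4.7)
The easy half of the theorem is already supplied by Proposition~\ref{prop_gen_hula}, which produces a continuous right inverse $m \mapsto M_{m,\Dc}$ to $\Gc$ and hence the continuous inclusion $\Sc(\Sigma_\Dc) \hookrightarrow \Gc({\Sc(\nf)}^{SO(3)})$. The real content of the theorem is the reverse direction: for every $F \in {\Sc(\nf)}^{SO(3)}$, the Gelfand transform $\Gc F$ admits an extension to a Schwartz function on $\Rb^3$. Once this is established, continuity of $F \mapsto \Gc F$ into $\Sc(\Sigma_\Dc)$ follows from the closed graph theorem --- both sides are Fr\'echet and the graph is closed thanks to the weaker continuity $\Gc : L^1(\nf)^{SO(3)} \to C_o(\Sigma_\Dc)$ --- which together with injectivity of $\Gc$ and the surjectivity from Proposition~\ref{prop_gen_hula} yields the Fr\'echet isomorphism.

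My first step would be to pass through the quotient $N' \cong H_1 \times \Rb$ via central integration. Since $\radon F \in {\Sc(N')}^{K'}$, the known Heisenberg-type result for $(K' \ltimes N', K')$ \cite{astengo_diblasio_ricci_08} yields $g \in \Sc(\Rb^4)$ with $g_{|\Sigma_{\Dc'}} = \Gc'(\radon F) = \Gc F \circ \Pi$. On the regular part of $\Sigma_\Dc$, where $\eta_2 > 0$ and $\Pi$ is a homeomorphism, the constraint $\eta_3^2 = \eta_2 \eta_4$ gives
$$
\Gc F(\eta_1, \eta_2, \eta_3) = g\!\left(\eta_1, \eta_2, \eta_3, \frac{\eta_3^2}{\eta_2}\right),
$$
so $\Gc F$ is already smooth away from the singular axis $\{\eta_2 = \eta_3 = 0,\ \eta_1 \ge 0\}$. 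All the difficulty is therefore localised near this axis.

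The main obstacle is that the substitution $\eta_4 = \eta_3^2/\eta_2$ becomes singular as $\eta_2 \to 0$. It is true that $g$ must be constant in $\eta_4$ along the one-dimensional singular part of $\Sigma_{\Dc'}$ (this is precisely what lets $\Gc F \circ \Pi$ descend through the two-to-one collapse by $\Pi$ on the singular set), but this rigidity is only a property on the spectrum and does not extend to $g$ as a function on $\Rb^4$; consequently, no substitution inside $g$ can by itself yield a Schwartz extension to $\Rb^3$. To bypass this, the plan --- following the announcement for Section~5 --- is to derive a smooth integral representation of $\Gc F$ directly as a function of $(\eta_1, \eta_2, \eta_3)$, via a mean value formula extending the one used by Geller on the Heisenberg group~\cite{geller_80}. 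Starting from
$$
\phi_{\lambda,l,r}(x,y) = \int_{SO(3)} e^{-i\lambda[k.y]_3}\, \Lc_l\!\left(\tfrac{\lambda}{2}\nn{[k.x]\tilde{\,}}^2\right) e^{-ir[k.x]_3}\, dk,
$$
one would expand the Laguerre factor and re-sum so that the pair $(l,\lambda)$ appears only through the Hilbert-basis invariants $(\eta_1, \eta_2, \eta_3) = (\lambda(2l+1)+r^2,\lambda^2,\lambda r)$, producing an identity
$$
\Gc F(\eta_1, \eta_2, \eta_3) = \int_{\nf} F(x,y)\, K(x,y;\eta_1,\eta_2,\eta_3)\, dx\, dy
$$
with a kernel $K$ smooth in $\eta \in \Rb^3$ and of Schwartz decay in $(x,y)$.

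The crux of the proof --- and where I expect the real work to lie --- is the verification of this mean value formula: showing that the Laguerre resummation absorbs the apparent singularities at $\lambda = 0$ and depends smoothly on $\eta$ across the singular axis, uniformly enough in $(x,y)$ that differentiation under the integral sign transfers the Schwartz seminorms of $F$ to those of $\Gc F$. Once such a $K$ is in hand, it immediately defines an $f \in \Sc(\Rb^3)$ with $f_{|\Sigma_\Dc} = \Gc F$, supplying the missing direction and completing the proof of the isomorphism.
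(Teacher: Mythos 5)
Your preliminary steps are sound: the easy inclusion $\Sc(\Sigma_\Dc) \hookrightarrow \Gc\big({\Sc(\nf)}^{SO(3)}\big)$ via Proposition~\ref{prop_gen_hula}, passing to $N'\cong H_1\times\Rb$ by central integration, the observation that $g(\eta_1,\eta_2,\eta_3,\eta_3^2/\eta_2)$ furnishes a smooth extension of $\Gc F$ on the open set $\{\eta_2>0\}$, and the identification of the singular axis $\eta_2=\eta_3=0$ as the locus of difficulty. These match the paper's set-up, as does the invocation of a closed-graph argument to get continuity once the extension exists.

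The gap is in the core step. The ``mean value formula'' you propose, namely $\Gc F(\eta)=\int_{\nf}F(x,y)\,K(x,y;\eta)\,dx\,dy$ with $K$ smooth in $\eta\in\Rb^3$ and of Schwartz decay in $(x,y)$, cannot hold as stated: for $\eta\in\Sigma_\Dc$ the identity would have to hold for every $F\in{\Sc(\nf)}^{SO(3)}$, and since the $K$-invariant distribution $\bar\phi_\eta-K(\cdot;\eta)$ annihilates all $K$-invariant Schwartz functions it must vanish, so $K(\cdot;\eta)$ is forced to coincide with $\bar\phi_\eta$; but the bounded spherical functions are not Schwartz --- they oscillate without decay in $y$, and $\phi_{0,R}(x,y)=\sin(R|x|)/(R|x|)$ decays only like $|x|^{-1}$. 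You also explicitly leave the construction unverified. The paper's Geller-type Lemma (Proposition~\ref{prop_Glem}) is a genuinely different statement: it is a remainder decomposition of $F$ itself, $F=\Gc(F)(L,0,0)+\Delta F_1+DF_2$ with $F_1,F_2\in{\Sc(\nf)}^{SO(3)}$, where $\Gc(F)(L,0,0)$ is a Schwartz convolution kernel produced by Hulanicki's theorem and the remainders lie in the images of the central operators $\Delta$ and $D$. Iterating this (Corollary~\ref{cor_Glem}) controls the full Taylor expansion of $\tilde g\circ\rho'$ along the singular axis; one then checks Property~(R), applies the radialisation/restriction machinery (Proposition~\ref{prop_im_restrict}) to extend $\tilde g\circ\rho'$ to a smooth $K$-invariant function on $\ncf$, and concludes with G.~Schwarz's Theorem and Mather's refinement for the continuity. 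Your proposal replaces this entire mechanism with an unproved --- and, in the form written, impossible --- kernel identity.
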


The group $N_{3,2}$ admits a slightly bigger group of automorphisms
than $SO(3)$, 
namely $O(3)$ acting by:
$$
k(x,y)=\big(kx,(\det k) ky\big)
\quad,\quad
k\in O(3)
,
$$

It is easily verified that $\{\nn{x}^2,\nn{y}^2, {(x \cdot y)}^2\}$ 
gives a Hilbert basis for $(\ncf,O(3))$ 
and the associated family of operators
by symmetrisation
is $\tilde{\mathcal D}=(L,\Delta,D^2)$. 
Following the same lines as in
\cite[Section~8]{astengo_diblasio_ricci_08}, 
we have the following.

\begin{cor}
The Gelfand transform is an isomorphism
between ${\Sc(\nf)}^{O(3)}$ and $\Sc(\Sigma_{\tilde{\mathcal D}})$
as Fr\'echet spaces.
\end{cor}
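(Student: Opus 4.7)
The strategy is to deduce the corollary from Theorem~\ref{thm_goal1} by treating the additional $O(3)/SO(3)$ symmetry as a $\Zb_2$-action and applying G.~Schwarz's theorem. First, note that $O(3)=SO(3)\cup(-\id_{\Rb^3})SO(3)$, and under the given $O(3)$-action, $-\id_{\Rb^3}$ acts on $\nf$ as the involution $\sigma_0:(x,y)\mapsto(-x,y)$, since $\det(-\id_{\Rb^3})=-1$. A direct check shows that $\sigma_0$ is an automorphism of $\nf$ (the commutator $\tfrac12 x\wedge x'$ is invariant under $x\mapsto -x$), so
\[
{\Sc(\nf)}^{O(3)}=\bigl\{F\in{\Sc(\nf)}^{SO(3)} \colon F\circ\sigma_0=F\bigr\}
\]
is a closed subspace of ${\Sc(\nf)}^{SO(3)}$.

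Next, I would compute the effect of $\sigma_0$ on the spectral side. Using the formulae of Section~\ref{sec_Gspec_N_N'}, a direct substitution (using $|-v|^2=|v|^2$ and ${[-v]}_3=-{[v]}_3$) yields $\phi_{\lambda,l,r}\circ\sigma_0=\phi_{\lambda,l,-r}$ and $\phi_{0,R}\circ\sigma_0=\phi_{0,R}$. Since $\phi_{\lambda,l,r}$ has $\Dc$-eigenvalues $(\lambda(2l+1)+r^2,\lambda^2,\lambda r)$ and $\phi_{0,R}$ lies in the singular set $\{\eta_3=0\}$, the pull-back by $\sigma_0$ corresponds exactly to the involution $\tau:(\eta_1,\eta_2,\eta_3)\mapsto(\eta_1,\eta_2,-\eta_3)$ on $\Sigma_{\Dc}$. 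Combined with the isomorphism $\Gc:{\Sc(\nf)}^{SO(3)}\to\Sc(\Sigma_{\Dc})$ of Theorem~\ref{thm_goal1}, this identifies ${\Sc(\nf)}^{O(3)}$ with the closed subspace ${\Sc(\Sigma_{\Dc})}^{\Zb_2}$ of classes invariant under $\tau$.

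The final step is to identify ${\Sc(\Sigma_{\Dc})}^{\Zb_2}$ with $\Sc(\Sigma_{\tilde{\mathcal D}})$ via the polynomial folding $\pi:(\eta_1,\eta_2,\eta_3)\mapsto(\eta_1,\eta_2,\eta_3^2)$, which sends $\Sigma_{\Dc}$ onto $\Sigma_{\tilde{\mathcal D}}$ and realises the $\Zb_2$-quotient. Pull-back $\pi^*:\Sc(\Sigma_{\tilde{\mathcal D}})\to{\Sc(\Sigma_{\Dc})}^{\Zb_2}$ is clearly continuous. For its inverse, given a representative $f\in\Sc(\Rb^3)$ of an element of ${\Sc(\Sigma_{\Dc})}^{\Zb_2}$, the symmetrisation $f_{\mathrm{sym}}(\eta)=\tfrac12\bigl(f(\eta)+f(\tau\eta)\bigr)$ lies in ${\Sc(\Rb^3)}^{\Zb_2}$ and represents the same class on $\Sigma_{\Dc}$; G.~Schwarz's theorem for the $\Zb_2$-action on $\Rb^3$ (with Hilbert basis $(\eta_1,\eta_2,\eta_3^2)$) then yields, continuously in $f$, a function $\tilde g\in\Sc(\Rb^3)$ with $\tilde g\circ\pi=f_{\mathrm{sym}}$. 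Composing $(\pi^*)^{-1}$ with $\Gc|_{{\Sc(\nf)}^{O(3)}}$ gives the desired Fr\'echet-space isomorphism, and a direct pairing check using the $O(3)$-invariance of $F$ confirms that it agrees with the Gelfand transform for $(O(3)\ltimes\nf,O(3))$. The only genuinely delicate point is the descent of Schwarz's theorem from $\Sc(\Rb^3)$ to the quotient $\Sc(\Sigma_{\Dc})$, which is routine once continuity of $\pi^*$ is known and parallels \cite[Section~8]{astengo_diblasio_ricci_08}.
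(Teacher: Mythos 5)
Your argument is correct and is essentially the same as what the paper's reference to \cite[Section~8]{astengo_diblasio_ricci_08} has in mind: identify the extra $O(3)/SO(3)$ symmetry with the involution $\sigma_0:(x,y)\mapsto(-x,y)$, observe that under $\Gc$ and Theorem~\ref{thm_goal1} this corresponds to the reflection $\tau:(\eta_1,\eta_2,\eta_3)\mapsto(\eta_1,\eta_2,-\eta_3)$ on $\Sigma_\Dc$, and fold by the map $(\eta_1,\eta_2,\eta_3)\mapsto(\eta_1,\eta_2,\eta_3^2)$ onto $\Sigma_{\tilde{\mathcal D}}$ using Whitney/Schwarz--Mather for the continuity. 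The verifications you cite (that $\sigma_0$ is an automorphism, that $\phi_{\lambda,l,r}\circ\sigma_0=\phi_{\lambda,l,-r}$ and $\phi_{0,R}\circ\sigma_0=\phi_{0,R}$, that $\Sigma_{\tilde{\mathcal D}}\subset\{\eta_3\ge0\}$ so the folded class is well defined, and the final pairing check identifying the resulting map with $\Gc_{O(3)}$) are all sound.
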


From now on,  
$N$ will stand for $\nf$ and $K$ for $SO(3)$.

\section{$\radon$ and restriction mappings}
\label{sec_mappings}

\subsection{The mapping $\radon$}
\label{subsec_radon}

In the introduction, 
we denoted by~$\radon F$ the function on~$N'$ 
given by integration of a function $F$ of~$N$ on 
the central subgroup
$\Rb^2_{(y_1,y_2)}$
whenever it makes sense,
for example on $L^1(N)$. 
It is sometimes convenient to consider $\radon$ 
as acting between functions defined on the Lie algebras, 
rather than on the groups. 
We will do so without any further mention.
The operator $\radon$ maps $K$-invariant functions on~$N$ 
to $K'$-invariant functions on~$N'$,
integrable functions on $N$ to integrable functions on $N'$ 
continuously,
Schwartz functions on~$N$ to Schwartz functions on~$N'$
continuously.
It respects convolution on the groups and abelian convolution on the Lie algebras.

We extend the definition of $\radon$ to left-invariant
differential operators on~$N$.
Let $D=D_P\in \Db(N)$, $P\in \Pc(\Nc)$.
Easy changes of variables,
see e.g. (\ref{eq1_subsec_proof_selfadj}) below,
show:
$$
\forall F\in C_c^\infty(N)
\quad,\quad
\forall G\in C_c^\infty(N')
\qquad
\langle \radon (DF), G \rangle
=
\langle \radon F, D_{P_{|\Nc'}}G \rangle
.
$$
We are led to define $\radon:\Db(N)\rightarrow \Db(N')$ by:
$\radon D_P=D_Q$
where $Q=P_{|\Nc'}$ is the restriction of $P$ to $\Nc'$.
For any $D\in \Db(N)$,
$\radon D$ is the only differential operator $T$ on $N$  satisfying:
$$
\forall F\in C_c^\infty(N)
\qquad
\radon [D F]=T \radon F
.
$$

The mapping $\radon$ on functions 
is dual to the restriction mapping from $\Nc$ to $\Nc'$
in the following sense.
Let us denote $\Fc_y$ and $\Fc_t$ the Fourier transform with respect
to the variables $y\in\Rb^3$ and $t\in\Rb$ respectively given by:
\begin{eqnarray*}
  \Fc_y F(x, \hat y)
  &=&
  \int_{\Rb^3} F(x, y) e^{-i y.\hat y} dy
,
  \\  
  \Fc_t G(x, \hat t)
  &=&
  \int_\Rb G(x, t) e^{-i t.\hat t} dt
;
\end{eqnarray*}
whenever it makes sense for a function $G$ on $N'$ and a function $F$ on $N$,
identified with functions on $\Nc'$ and $\Nc$ respectively,
we have:
\begin{equation}
  \label{eq_radon_restrict}
G=\radon F
\Longleftrightarrow 
\Fc_t G =  \Fc_y F_{|\Nc'}   
\end{equation}

In the following subsection,
we describe the restriction mapping.

\subsection{Restriction and radialisation mappings}

For a function $f$ on $\Nc$,
we denote by $\restrict f=f_{|\Nc'}$ its restriction to $\Nc'$.
We set:
$$
\Nc_o=\Rb^3_x\times (\Rb^3_y\backslash \{0\})
\quad\mbox{and}\quad
\Nc'_o=\Rb^3_x \times (\Rb_t\backslash \{0\})
.
$$

In the next lemma, we define the radialisation mapping $\radial$:
\begin{lem}
\label{lem_radial}
 For a function $h \in {C^\infty(\Nc'_o)}^{K'}$
and $(x,y)\in \Nc_o$,
the following: 
$$
\radial(h)(x,y)=h(k^{-1}x, t)
\quad,\quad
\mbox{where} 
\quad
y=k(0,0,t)\;
\mbox{for some}\; k\in K.
$$
defines a $K$-invariant function  $\radial(h)$,
that is  smooth on $\Nc_o$.

$\radial$ is an isomorphism between the topological vector spaces
${C^\infty(\Nc_o)}^K$ and ${C^\infty(\Nc'_o)}^{K'}$,
whose inverse is $\restrict$.
\end{lem}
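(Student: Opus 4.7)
The plan is to prove in sequence: (a) $\radial(h)$ is well-defined, independently of the choice of $(k,t)$; (b) it is $K$-invariant; (c) it is smooth on $\Nc_o$; (d) $\radial$ and $\restrict$ are mutual inverses between ${C^\infty(\Nc_o)}^K$ and ${C^\infty(\Nc'_o)}^{K'}$; and (e) both maps are continuous. The main difficulty is step (a): one must show that the full ambiguity in representing $y$ as $k(0,0,t)$ is absorbed by the $K'$-invariance of~$h$, and this is delicate precisely because $K'\cong O(2)$ is strictly larger than the connected stabiliser $SO(2)$ of the $y_3$-axis in $SO(3)$.

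For (a), fix $(x,y)\in\Nc_o$ and one representation $y=k_0(0,0,t_0)$. Any other representation $(k,t)$ falls into two cases. In the first, $t=t_0$ and $g:=k_0^{-1}k$ fixes $(0,0,t_0)$, so $g$ is a rotation about the $y_3$-axis and lies in $K'$, acting on $N'$ as $(x,t)\mapsto(gx,t)$; the $K'$-invariance of $h$ gives $h(k^{-1}x,t)=h(g^{-1}k_0^{-1}x,t_0)=h(k_0^{-1}x,t_0)$. In the second, $t=-t_0$ and $g:=k_0^{-1}k$ sends $(0,0,-t_0)$ to $(0,0,t_0)$. Setting $\tau:=\mathrm{diag}(1,-1,-1)\in SO(3)$, one checks that $\tau$ stabilises $\Rb^2_{(y_1,y_2)}$ (hence $\tau\in K'$) and sends $-e_3$ to $e_3$; therefore $\tau^{-1}g$ fixes the $y_3$-axis pointwise and is a rotation $\sigma$ about that axis, which again lies in $K'$. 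The induced actions on $N'$ are $(x,t)\mapsto(\sigma x,t)$ for $\sigma$ and $(x,t)\mapsto(\tau x,-t)$ for $\tau$, so applying $K'$-invariance of $h$ twice yields
$$
h(k^{-1}x,-t_0)
=h(\sigma^{-1}\tau^{-1}k_0^{-1}x,-t_0)
=h(\tau^{-1}k_0^{-1}x,-t_0)
=h(k_0^{-1}x,t_0).
$$

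The remaining steps are short. For (b), if $k'\in K$ and $y=k(0,0,t)$, then $k'y=(k'k)(0,0,t)$, so $\radial(h)(k'x,k'y)=h((k'k)^{-1}k'x,t)=h(k^{-1}x,t)=\radial(h)(x,y)$. For (c), the projection $SO(3)\to S^2$, $k\mapsto ke_3$, is a principal $SO(2)$-bundle and admits local smooth sections: near any $y_0\ne 0$ one may choose a smooth $y\mapsto k(y)\in SO(3)$ with $k(y)e_3=y/\nn{y}$, so locally $\radial(h)(x,y)=h(k(y)^{-1}x,\nn{y})$, which is smooth in $(x,y)$. For (d), taking $k=\id$ at $y=(0,0,t)$ shows $\restrict\circ\radial(h)=h$; conversely, for $f\in{C^\infty(\Nc_o)}^K$ the function $\restrict(f)$ is $K'$-invariant (since $K'\subset K$ preserves the $y_3$-axis up to sign and $f$ is $K$-invariant), and $\radial(\restrict f)(x,y)=f(k^{-1}x,(0,0,t))=f(k^{-1}x,k^{-1}y)=f(x,y)$. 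For (e), $\restrict$ is obviously continuous, and the local formula in (c) bounds the $C^\infty$-seminorms of $\radial(h)$ on any compact subset of $\Nc_o$ by the $C^\infty$-seminorms of $h$ on a corresponding compact subset of $\Nc'_o$.
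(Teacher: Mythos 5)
Your proof is correct and, for the smoothness step, it is essentially the same idea as the paper's: the paper parametrises a neighbourhood of a point of $\Nc_o$ by $(x,k,t)$ with $k$ ranging over a small transversal to $k_0K'$ in $K$, which is precisely a local section of $SO(3)\to S^2$; you phrase it directly in bundle language, which is cleaner. The one place you genuinely add value is well-definedness: the paper dismisses this as ``easy to see,'' whereas you spell out the nontrivial half --- the representation $y=k(0,0,-t_0)$ with $t_0\ne0$ --- by producing $\tau=\mathrm{diag}(1,-1,-1)\in K'$, which reverses the $t$-coordinate on $N'$, and factoring $g=k_0^{-1}k$ as $\tau\sigma$ with $\sigma\in SO(2)\subset K'$; this is exactly where $K'\cong O(2)$ rather than $SO(2)$ is used, and it deserves to be said.

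One caveat worth recording: the paper's proof does more work than you do in the smoothness step on purpose. It expands $\partial_y^I$ in the $(k,t)$-coordinates as a sum $\sum_J c_{I,J}(k,t)\,\partial_t^{j_0}A_1^{j_1}A_2^{j_2}A_3^{j_3}$ and records that the $c_{I,J}$ are homogeneous of degree $j_0-|I|$ in $t$. That homogeneity estimate is not needed for Lemma~\ref{lem_radial} itself --- your ``composition of smooth maps'' argument suffices --- but it is the engine behind the boundary analysis in Proposition~\ref{prop_im_restrict}, where one needs to control $\partial_y^{I_o}\radial(g)$ as $t\to0$. So your shorter proof is fine here, but if you later tried to prove Proposition~\ref{prop_im_restrict} you would need to go back and extract that quantitative information, which the paper bundles into this proof precisely to have it available.
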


\begin{proof}
 For a function $h \in {C^\infty(\Nc'_o)}^{K'}$
and $(x,y)\in \Nc_o$,
it is easy to see that $\radial(h)(x,y)$ is well defined
and $K$-invariant.
  
Let us show that $\radial(h)\in {C^\infty(\Nc_o)}^K$.
We choose a basis ${(A_j)}_{j=1,2,3}$ for the Lie algebra of $K$.
At a point $(x_0,y_0)\in\Nc_0$ (with $y_0=k_0(t_0,0,0)$, $t_0=|y_0|\ne0$) we choose a local coordinate system $(x,y)=\big(x,k(t,0,0)\big)=\sigma(x,k,t)$, where $x\in\Rb^3$, $t\in\Rb^+$ and $k$ varies in a small two-dimensional surface in $K$ containing $k_0$ and transversal to $k_0K'$. This change of variables does not affect the derivatives in $x$, whereas
$$
\partial_{y_j}=
c_{j,0}(k,t) \partial_t
+\sum_{j'=1,2,3} c_{j,j'} (k,t) A_{j'}
.
$$
By homogeneity,
$c_{j,0}\in C^\infty(K_k\times \Rb_t^*)$ is homogeneous of degree 0 in $t$,
and the $c_{j,j'}\in C^\infty(K_k\times \Rb_t^*)$ homogeneous of degree $(-1)$ in $t$.
More generally,
we can write the derivative
$$
\partial_y^I=\partial_{y_1}^{i_1}
\partial_{y_2}^{i_2}
 \partial_{y_3}^{i_3}
\quad,\quad
I=(i_1,i_2,i_3)\in\Nb^3,
$$
as:
\begin{equation}
  \label{eq_partial_y_I}
\partial_y^I=
\sum c_{I,J}(k,t) \partial_t^{j_0}
 A_1^{j_1}  A_2^{j_2} A_3^{j_3}
,  
\end{equation}
where the sum is over $J=(j_0,j_1,j_2,j_3)\in\Nb^4$,
with $\nn{J}=\nn{I}$,
and the $c_{I,J}\in C^\infty(K_k\times \Rb_t^*)$ are homogeneous of
degree~$(j_0-\nn{I})$  in~$t$.
As the function $(x;k,t)\mapsto h(k^{-1}x, t)$ is smooth on $\Rb^3 \times K \times \Rb$,
(\ref{eq_partial_y_I}) implies that $\radial h$ is smooth on $\Nc_o$.
Furthermore $h\in C^\infty(\Nc'_o)\mapsto \radial h\in C^\infty(\Nc_o)$ is continuous.
\end{proof}

Lemma~\ref{lem_radial} implies that the mapping $\restrict$ is a 1-1 on 
${C^\infty(\Nc)}^K$ and on ${\Sc(\Nc)}^K$.
Let us determine $\restrict ({C^\infty(N)}^K)$.
We will need the following notation:
\begin{itemize}
\item For $f\in {C^\infty (\Nc)}^K$,
  we denote by $P_M^{(f)}(x,t)$ 
  the homogeneous polynomial of degree $M$
  in the Taylor expansion of $f(x,\cdot)$  at $y=0$:
  $$
  P_M^{(f)}(x,y)
  =
  \sum_{\nn{j}= M} \frac 1{j!}\partial_y^j f(x,0) y^j
.
  $$
\item For $g\in {C^\infty (\Nc')}^{K'}$,
  we denote by $Q_M^{(g)}(x,t)$ 
  the homogeneous polynomial of degree $M$
  in the Taylor expansion of $g(x,\cdot )$  at $t=0$:
  $$
  Q_M^{(g)}(x,t)
  =
  \frac 1{M!}\partial_t^M g(x,0) t^M
.
  $$
\end{itemize}

We see:
$$
Q_M^{(\restrict f)}
=
\restrict P_M^{(f)} 
\quad\mbox{and thus}\quad
\radial Q_M^{(\restrict f)}
=
P_M^{(f)}
.
$$
Thus a function $g\in   {C^\infty (\Nc')}^{K'}$ 
that is the restriction of some
function $f\in   {C^\infty (\Nc)}^K$,
necessarily has the following property:

\textbf{Property (R).}
\textit{  For any $M\in\Nb$,
  $\radial(Q_M^{(g)})$ extends to a smooth function on $\Nc$ 
  which is a homogeneous polynomial in $y$ of degree $M$,
  with smooth coefficients in $x$. }

It turns out that this condition is also sufficient:

\begin{prop}
\label{prop_im_restrict}
  Let $g\in   {C^\infty (\Nc')}^{K'}$.

  The function $g$ is in the image of $\restrict$ 
  if and only if it satisfies Property~(R).

  In this case,
  $\radial (g)$ extends to a $K$-invariant smooth function $f$ on $\Nc$,
  whose restriction to $\Nc'$ is $g$
  and we have $Q_M^{(g)}=\restrict [P_M^{(f)}]$.
Moreover if in addition $g\in {\Sc(\Nc')}^{K'}$,
then $\radial (g)\in {\Sc(\Nc)}^{K}$
\end{prop}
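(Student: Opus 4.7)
The strategy is to split $g$ into its formal Taylor expansion at $t=0$ — extended to $\Nc$ via Property~(R) — plus a remainder that is flat at $t=0$, and to handle each piece separately. Necessity of~(R) is essentially contained in the discussion preceding the statement: if $g=\restrict f$ with $f\in{C^\infty(\Nc)}^K$, then the identity $Q_M^{(g)}=\restrict P_M^{(f)}$ gives $\radial Q_M^{(g)}=P_M^{(f)}$ on $\Nc_o$, so $P_M^{(f)}$ is the required $K$-invariant polynomial extension of degree $M$ in $y$. The substance of the proposition is therefore the converse, together with the Schwartz refinement.

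Granted Property~(R), let $\tilde P_M \in {C^\infty(\Nc)}^K$ denote the (unique, by density of $\Nc_o$) $K$-invariant polynomial extension of $\radial Q_M^{(g)}$. First I construct an auxiliary $F_1 \in {C^\infty(\Nc)}^K$ whose Taylor series in $y$ at $y=0$ is $\sum_M \tilde P_M$, via a Borel-type sum
\[
F_1(x,y)=\sum_{M\ge 0}\chi\big(\epsilon_M\,\nn{y}\big)\,\tilde P_M(x,y),
\]
with $\chi\in C_c^\infty(\Rb)$ equal to $1$ near $0$ and $\epsilon_M\to\infty$ increasing fast enough for smooth convergence; each summand is $K$-invariant since $\nn{y}$ is, and the cutoff equals $1$ in a neighbourhood of $y=0$, so the $M$-th Taylor polynomial of $F_1$ at $y=0$ is exactly $\tilde P_M$. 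Consequently $h:=g-\restrict F_1$ lies in ${C^\infty(\Nc')}^{K'}$ and is $C^\infty$-flat at $t=0$. Lemma~\ref{lem_radial} already places $\radial h$ in ${C^\infty(\Nc_o)}^K$; I claim it extends by $0$ to a $K$-invariant $F_2\in C^\infty(\Nc)$. In the coordinates $(x,k,t)$ from the proof of Lemma~\ref{lem_radial}, formula~(\ref{eq_partial_y_I}) expresses every derivative $\partial_x^\alpha\partial_y^I \radial h$ as a finite sum of terms $c_{I,J}(k,t)\,(\partial_x^\alpha\partial_t^{j_0}h)(k^{-1}x,t)$ with $c_{I,J}$ homogeneous of degree $j_0-\nn{I}$ in $t$; since $h$ is flat at $t=0$, each factor $\partial_t^{j_0}h$ absorbs any inverse power of $t$, and the derivative tends to $0$ as $y\to 0$. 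Thus $F_2\in{C^\infty(\Nc)}^K$, $f:=F_1+F_2\in{C^\infty(\Nc)}^K$ satisfies $\restrict f=g$, and $P_M^{(f)}=\tilde P_M$ holds by construction.

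For the Schwartz refinement, one tracks decay in each piece. For $F_2$, the estimate
\[
\nn{\partial_x^\alpha\partial_y^I F_2(x,y)}\lesssim \nn{t}^{j_0-\nn{I}}\,\nn{(\partial_x^\alpha\partial_t^{j_0}h)(k^{-1}x,t)}
\]
combined with the fact that $h$ is Schwartz on $\Nc'$ and flat at $t=0$ yields rapid decay in $(x,y)$, so $F_2\in{\Sc(\Nc)}^K$. For $F_1$, the coefficients of the polynomials $\tilde P_M$ are, up to combinatorial factors, Schwartz functions of $x$ obtained from $\partial_t^M g(\cdot,0)$; choosing $\epsilon_M\to\infty$ rapidly enough forces every Schwartz seminorm of $F_1$ on $\Nc$ to be controlled by finitely many seminorms of $g$, in the standard Borel--Schwartz manner (cf.~\cite{astengo_diblasio_ricci_08}). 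The main obstacle is the flat-remainder step: one must exploit the homogeneity of the $c_{I,J}$'s provided by Lemma~\ref{lem_radial} against the infinite-order flatness of $h$, quantitatively enough to yield both $C^\infty$-smoothness at $y=0$ and Schwartz decay at infinity.
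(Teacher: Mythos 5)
Your proof is correct in substance but takes a genuinely different route from the paper's. Where you first perform a Borel-type summation of the polynomial extensions $\tilde P_M$ to manufacture a single smooth $K$-invariant $F_1$ whose $y$-Taylor series at $0$ is $\sum_M\tilde P_M$, then reduce to the case of a remainder $h=g-\restrict F_1$ that is $C^\infty$-flat at $t=0$, the paper works directly with $\radial(g)$ itself: for each fixed multi-index $I_o$ with $|I_o|=M_o+1$ it writes, on $\Nc_o$,
$$
\partial_y^{I_o}\radial(g)=\partial_y^{I_o}\left[\radial\left(g-\sum_{j=0}^{M_o}Q_j^{(g)}\right)\right],
$$
exploiting that $\partial_y^{I_o}$ annihilates the polynomial extensions $P_j$ of degree $j\le M_o$, and then substitutes the Taylor integral-remainder formula (\ref{eq_taylor_formula}) for $g-\sum_j Q_j^{(g)}$, so that the homogeneity of the coefficients $c_{I_o,J}$ from (\ref{eq_partial_y_I}) is cancelled explicitly by the factor $t^{M_o+1}$ in the remainder. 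This avoids Borel summation altogether and produces local boundedness of all derivatives of $\radial(g)$ directly, whence the smooth extension. Both arguments hinge on the same key computation (\ref{eq_partial_y_I}) and the degree $j_0-|I|$ homogeneity of the $c_{I,J}$'s; what your version buys is a clean separation into a ``formal Taylor part'' $F_1$ and a flat part $F_2$, at the cost of having to manage the Borel construction (in particular making the single sequence $\epsilon_M$ work uniformly in $x$, which needs either an exhaustion-by-compacts/diagonal argument or $x$-dependent cutoff scales). Neither your proof nor the paper's spells out the Schwartz refinement in detail; your sketch for it (control the coefficients of $\tilde P_M$ and the decay of the flat remainder) is plausible but would need the same kind of quantitative bookkeeping. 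One minor point worth making precise in your write-up: when you assert that the coefficients of $\tilde P_M$ are Schwartz in $x$ when $g$ is Schwartz, you should say explicitly why the extension furnished by Property~(R) inherits Schwartz decay from $\partial_t^M g(\cdot,0)$ — this is where the $K$-invariance and the fact that $\restrict$ is injective on ${\Sc(\Nc)}^K$ enter.
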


In the proof, we adapt the ideas of the Euclidean setting
\cite[Theorem~2.4]{helgason_bk3}.

\begin{proof}
  Let $g\in   {C^\infty (\Nc')}^{K'}$ satisfying
  Property~(R).
  For each $M$,
  we denote $P_M$ the extension of 
  $\radial(Q_M^{(g)})$
  to a smooth function on $\Nc$ 
  that is a homogeneous polynomial in $y$ of degree $M$,
  with smooth coefficients in $x$.

Let $M_o\in\Nb$.
The Taylor Formula gives:
\begin{equation}
  \label{eq_taylor_formula}
g(x,t)-\sum_{j=0}^{M_o} Q_j^{(g)} (x,t)
=
\frac{t^{M_o+1}}{M_o!} 
\int_0^1 {(1-w)}^{M_o} 
\left( \partial_t^{M_o+1} g\right) (x, wt)
dw
.  
\end{equation}
Let $I_o\in\Nb^3$ with $\nn{I_o}=M_o+1$.
We have on $\Nc_o$:
$$
\partial_y^{I_o} \radial(g)
=
\partial_y^{I_o} \left(\radial(g)-\sum_{j=0}^{M_o}P_j \right)
=
\partial_y^{I_o} \left[\radial\left (g-\sum_{j=0}^{M_o} Q_j^{(g)} \right)\right]
.
$$
Now for any $(x,y)\in \Nc_o$,
$y\not= 0$ can be written $y=k(0,0,t)$, $t\in\Rb^*$, $k\in K$,
and by (\ref{eq_partial_y_I}) and (\ref{eq_taylor_formula}),
$\left(\partial_y^{I_o} \radial(g)\right)(x,y)$
can be written as the sum over $J\in\Nb^4$,
$\nn{J}=M_o+1$,
of:
$$
\frac{c_{I_o,J}(k,t) }{M_o!} 
\int_0^1 {(1-w)}^{M_o} 
\partial_t^{j_0}
 A_1^{j_1}  A_2^{j_2} A_3^{j_3}
\left[t^{M_o+1} \left( \partial_t^{M_o+1} g\right) (k^{-1}x, wt)\right]
dw
.
$$
This last term remains bounded if $0<\nn{y}=\nn{t}\leq 1$
because $c_{I_o,J}$ is homogeneous of degree $j_o-(M_o+1)$.
This implies that $\partial_y^{I_o} \radial(g)$ is bounded on a compact
neighborhood of $(x,0)$ for any $x$, and any $I_o$ and $M_o$.
It is easy to see that for any $I\in\Nb^3$,
 $\partial_x^{I}\partial_y^{I_o} \radial(g)$ satisfies the same
 conditions.
Local boundedness of all derivatives is sufficient to imply that
$\radial (g)$ has a smooth extension to $\Nc$.
\end{proof}

We deduce easily from (\ref{eq_radon_restrict})
 the following characterisation of $\radon({\Sc(\Nc)}^K)$:
\begin{prop}
\label{prop_im_radon}
 Let $G\in   {\Sc (N')}^{K'}$.
  The function $G$ is in $\radon ({\Sc(N)}^K)$ 
  if and only if either of the following equivalent conditions is satisfied:
  \begin{itemize}
\item[{\rm (i)}]  $\Fc_t G$ 
 (identified with a function on $\Nc'$) 
 satisfies 
  Property~(R);
\item[{\rm (ii)}] denoting by $\Fc_{x,t}$ the Fourier transform with respect
to the variables $x\in\Rb^3$ and $t\in\Rb$ given by:
$$  
\Fc_{x,t} G(\hat x, \hat t)
  =
  \int_{\Rb^3\times \Rb } G(x, t) e^{-i x.\hat x} e^{-i t.\hat t} dx dt
\ ,
$$
 $\Fc_{x,t} G$ 
 (identified with a function on $\Nc'$) 
 satisfies   Property~(R);
\end{itemize}
\end{prop}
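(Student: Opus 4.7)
The plan is to use the Fourier duality (\ref{eq_radon_restrict}) between $\radon$ and $\restrict$ to transfer the characterization of $\restrict({\Sc(\Nc)}^K)$ given by Proposition~\ref{prop_im_restrict} to one for the image of $\radon$. The equivalence of (i) and (ii) will then follow from the purely formal observation that Fourier transform in the $x$-variable preserves Property~(R).

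For (i), I first record two basic invariance facts. The Fourier transform $\Fc_y$ is a continuous bijection of ${\Sc(\Nc)}^K$ onto itself: the change of variable $y \mapsto ky$ in the defining integral, together with the simultaneous orthogonal action of $SO(3)$ on $x$ and $y$, shows that $\Fc_y$ preserves $K$-invariance. Similarly $\Fc_t$ is an automorphism of ${\Sc(\Nc')}^{K'}$, because $K'$ acts on $t \cong y_3$ by $\pm 1$. Combining this with (\ref{eq_radon_restrict}), one has $G \in \radon({\Sc(N)}^K)$ if and only if $\Fc_t G$ lies in $\restrict({\Sc(\Nc)}^K)$, which by Proposition~\ref{prop_im_restrict} is exactly saying that $\Fc_t G$ satisfies Property~(R). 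Explicitly, given $F \in {\Sc(N)}^K$ with $G = \radon F$ one sets $\Phi = \Fc_y F$; conversely, if $\Fc_t G$ satisfies~(R), Proposition~\ref{prop_im_restrict} furnishes a $K$-invariant Schwartz extension $\Phi$ of $\Fc_t G$, and $F := \Fc_y^{-1} \Phi$ satisfies $\radon F = G$.

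For the equivalence (i) $\Leftrightarrow$ (ii), it suffices to check that for any $g \in {\Sc(\Nc')}^{K'}$, Property~(R) for $g$ is equivalent to Property~(R) for $\Fc_x g$; applying this to $g = \Fc_t G$ yields the claim, since $\Fc_{x,t} = \Fc_x \circ \Fc_t$. Since $\Fc_x$ commutes with $\partial_t$ and with evaluation at $t = 0$, one has $Q_M^{(\Fc_x g)}(\hat x, t) = \Fc_x\big[Q_M^{(g)}(\cdot, t)\big](\hat x)$. If $P(x, y) = \sum_{|j| = M} a_j(x)\, y^j$ is a smooth, $K$-invariant, homogeneous-of-degree-$M$-in-$y$ extension of $\radial Q_M^{(g)}$ (so the $a_j$ are smooth, and in fact Schwartz, in $x$), then $\sum_{|j| = M} \Fc_x[a_j](\hat x)\, y^j$ is a smooth, $K$-invariant, homogeneous-of-degree-$M$-in-$y$ extension of $\radial Q_M^{(\Fc_x g)}$, the coefficients remaining smooth since the Fourier transform of a Schwartz function is Schwartz. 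The converse direction is identical with $\Fc_x^{-1}$ in place of $\Fc_x$.

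The only real subtlety is the bookkeeping of invariances --- that $\Fc_y$ resp.\ $\Fc_x$ preserve $K$-invariance and that $\Fc_t$ preserves $K'$-invariance --- and these are immediate from the diagonal orthogonal action of $SO(3)$ on $\Rb^3_x \times \Rb^3_y$. Everything else is a direct translation via (\ref{eq_radon_restrict}) of Proposition~\ref{prop_im_restrict}, so no new analytic work is required.
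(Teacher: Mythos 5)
The paper itself offers no proof of this proposition beyond the remark that it follows ``easily from (\ref{eq_radon_restrict})''; your write-up is essentially a careful unpacking of that remark and is correct in spirit. Your treatment of condition (i) --- pass through $\Fc_t$, use (\ref{eq_radon_restrict}), observe that $\Fc_y$ is an automorphism of ${\Sc(\Nc)}^K$, then invoke Proposition~\ref{prop_im_restrict} together with its ``moreover'' clause --- is exactly the intended argument.

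For the equivalence (i)~$\Leftrightarrow$~(ii), however, there is a small unjustified step. You push $\Fc_x$ through the extension $P(x,y)=\sum_{|j|=M}a_j(x)y^j$ coefficient by coefficient, asserting parenthetically that the $a_j$ are ``in fact Schwartz''. Property~(R) only requires the $a_j$ to be smooth, and $\Fc_x a_j$ is not a smooth function for a general smooth $a_j$. The claim \emph{is} true here: once $g\in{\Sc(\Nc')}^{K'}$ satisfies~(R), Proposition~\ref{prop_im_restrict} gives a Schwartz extension $f=\radial g$ of $g$ to $\Nc$, the uniqueness of the extension forces $P_M=P_M^{(f)}$, and the Taylor coefficients of a Schwartz function are Schwartz --- but this deserves a sentence, since it is precisely the nontrivial ``moreover'' of Proposition~\ref{prop_im_restrict} rather than something visible from Property~(R) alone. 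A cleaner route that sidesteps the issue entirely is to note that $\Fc_x$ is an automorphism of ${\Sc(\Nc)}^K$ commuting with $\restrict$ (it acts only in the $x$-variable, which is untouched by restriction to $\Nc'$), so $\Fc_t G\in\restrict({\Sc(\Nc)}^K)$ iff $\Fc_{x,t}G=\Fc_x(\Fc_t G)\in\restrict({\Sc(\Nc)}^K)$, and then Proposition~\ref{prop_im_restrict} converts both memberships to Property~(R) without ever touching the individual coefficients $a_j$.
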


From G. Schwarz's Theorem,
it follows
(compare with \cite[Theorem~2.4]{helgason_bk3}):

\begin{cor}
Let $G\in   {\Sc (N')}^{K'}$.
  The function $G$ is in $\radon ({\Sc(N)}^K)$ 
  if and only if either of the following equivalent conditions is satisfied:
  \begin{itemize}
\item[{\rm (i)}]
for each $j\in\Nb$
  there exist Schwartz functions $a_{j,i}\in \Sc(\Rb)$, $i=0,\ldots,j$
  satisfying:
  $$
  \forall x\in \Rb^3
  \qquad
  \int_\Rb G(x,t)t^j dt=\sum_{i=0}^j a_{i,j}(\nn{x}^2)
  x_3^i\ ;
  $$
\item[{\rm (ii)}] for each $j\in\Nb$
there exist Schwartz functions $b_{j,i}\in \Sc(\Rb)$, $i=0,\ldots,j$
satisfying:
$$
\forall \zeta\in \Rb^3
\qquad
\int_{\Rb^3}\int_\Rb G(x,t)t^j e^{-i x.\zeta}dt dx
=
\sum_{i=0}^j b_{i,j}(\nn{\zeta}^2)\zeta_3^i
.
$$
\end{itemize}  
\end{cor}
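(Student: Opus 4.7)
The plan is to deduce the corollary from Proposition~\ref{prop_im_radon} by making Property~(R) explicit via G.~Schwarz's Theorem and the invariant theory of $SO(3)$ on $\ncf$. By Proposition~\ref{prop_im_radon}, $G\in\radon({\Sc(N)}^K)$ is equivalent to $\Fc_t G$ satisfying Property~(R), and also equivalent to $\Fc_{x,t}G$ doing so. I will translate the first equivalence into condition~(i) and the second into condition~(ii).

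The key algebraic step is to identify the $K$-invariant smooth functions on $\ncf$ that are homogeneous polynomials of degree $M$ in $y$ with Schwartz-in-$x$ coefficients. Since the Lemma in Section~\ref{sec_results} exhibits $\nn{x}^2$, $\nn{y}^2$, $x\cdot y$ as a Hilbert basis for $(\ncf,SO(3))$, every such polynomial takes the form
$$
P(x,y)=\sum_{2m+n=M}A_{m,n}(\nn{x}^2)\,(\nn{y}^2)^m\,(x\cdot y)^n,
$$
with $A_{m,n}\in\Sc(\Rb)$, the Schwartz regularity following from a covariant version of G.~Schwarz's Theorem. Restricting $P$ to $\Nc'$ by setting $y=(0,0,t)$ gives $\restrict P(x,t)=t^M\sum_{2m+n=M}A_{m,n}(\nn{x}^2)\,x_3^n$.

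Since $\partial_t^M\Fc_tG(x,0)=(-i)^M\int_\Rb G(x,t)\,t^M\,dt$, the Taylor term $Q_M^{(g)}(x,t)$ (with $g=\Fc_tG$) is a constant multiple of $\bigl(\int_\Rb G(x,t)\,t^M\,dt\bigr)\,t^M$. Property~(R) then reduces to the existence, for each $M$, of Schwartz functions $a_{n,M}\in\Sc(\Rb)$ with
$$
\int_\Rb G(x,t)\,t^M\,dt=\sum_{n=0}^M a_{n,M}(\nn{x}^2)\,x_3^n
$$
(coefficients of parity different from $M$ being forced to zero by the $K'$-equivariance of the left-hand side), which is exactly condition~(i); conversely, from such an expansion one reconstructs the $K$-invariant polynomial $P$. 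The argument for~(ii) is entirely parallel, applied to $\Fc_{x,t}G$ in place of $\Fc_tG$ and using $\partial_{\hat t}^M\Fc_{x,t}G(\hat x,0)=(-i)^M\iint G(x,t)\,t^M\,e^{-ix\cdot\hat x}\,dt\,dx$ with $\hat x$ renamed $\zeta$.

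The main technical obstacle is the Schwartz regularity in the key algebraic step: showing that the coefficients $A_{m,n}$ in the canonical expansion of an $SO(3)$-invariant polynomial of $y$-degree $M$ with Schwartz-in-$x$ coefficients are themselves Schwartz. The direct form of G.~Schwarz's Theorem handles genuine Schwartz functions on Euclidean space, not polynomial-in-$y$ functions, so one needs a covariant version. This is obtained by decomposing $\Pc^M(\Rb^3_y)$ into irreducible $SO(3)$-components (the spaces $\Hc^l$ of spherical harmonics) and observing that $SO(3)$-equivariant Schwartz maps $\Rb^3_x\to\Hc^l(\Rb^3_y)$ are spanned by the zonal harmonics with radial Schwartz multiples.
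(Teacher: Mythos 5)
Your overall route matches the paper's intent: the corollary is obtained from Proposition~\ref{prop_im_radon} by making Property~(R) explicit through the Hilbert basis $(\nn{x}^2,\nn{y}^2,x\cdot y)$ of $(\ncf,SO(3))$ and identifying the $M$-th Taylor coefficient of $\Fc_t G$ (resp.\ $\Fc_{x,t}G$) in $\hat t$ with the moment $\int_\Rb G(x,t)t^M\,dt$ (resp.\ $\iint G(x,t)t^M e^{-ix\cdot\zeta}\,dt\,dx$). The parity remark --- that $K'$-equivariance forces $a_{i,M}=0$ for $i\not\equiv M \pmod 2$, via the algebraic independence of $\nn{x}^2$ and $x_3$ --- is correct and needed when reconstructing the $K$-invariant polynomial in the converse direction.

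The ``main technical obstacle'' you flag, and the proposed remedy via a ``covariant G.~Schwarz Theorem'' for $SO(3)$-equivariant maps into $\Pc^M(\Rb^3_y)\cong\bigoplus_l\Hc^l$, is more machinery than the statement requires, and you do not actually carry it out. Note first that Property~(R) as stated only asks for \emph{smooth} coefficients, so for the implication (i)$\Rightarrow G\in\radon(\Sc(N)^K)$ no Schwartz regularity of the $A_{m,n}$ is needed at all. For the other implication, you can avoid the polynomial-in-$y$ difficulty entirely by applying the ordinary G.~Schwarz/Mather theorem not to the Taylor polynomial $P_M$ but to $\Fc_y F$ itself, which is an honest element of ${\Sc(\Nc)}^K$ once $G=\radon F$: write $\Fc_y F=\hat f\circ\rho$ with $\hat f\in\Sc(\Rb^3)$, so that by~(\ref{eq_radon_restrict})
$$
\Fc_t G(x,\hat t)=\hat f\bigl(\nn{x}^2,\hat t^{\,2},x_3\hat t\bigr).
$$
Taylor-expanding $\hat f$ at $(\cdot,0,0)$ and collecting the coefficient of $\hat t^M$ yields exactly condition~(i) with $a_{i,M}$ a constant multiple of $\partial_2^{(M-i)/2}\partial_3^{\,i}\hat f(\cdot,0,0)\in\Sc(\Rb)$, and similarly for (ii) with $\Fc_{x,t}$. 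This removes the gap without invoking any equivariant refinement of Schwarz's Theorem.
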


\section{Proof of Theorem~\ref{thm_goal1}}
\label{sec_proofs}

Here we give the proof of Theorem~\ref{thm_goal1}.
It is based on the properties of mappings explained 
in Section~\ref{sec_mappings}
and on results already shown on the Heisenberg group
\cite{astengo_diblasio_ricci_07,astengo_diblasio_ricci_08}.
These two key ingredients are used 
in the proofs of a ``Geller-type'' Lemma  (Subsection~\ref{subsec_geller_lem})
and  of Theorem~\ref{thm_goal1} (Subsection~\ref{subsec_proof_thmgoal}).

\subsection{The Gelfand pair $(K'\ltimes N',K')$}
\label{subsec_N'}

We easily check that
$$
\rho'(x,t)=(\nn{x}^2,t^2,x_3 t, x_3^2)
,
$$
defines a Hilbert mapping of $(\Nc',K')$,
which satisfies:
$$
\rho'(x,t)=\left(\rho_{|\Nc'}(x,(0,0,t)),x_3^2\right)
\quad\mbox{and}\quad
\Dc'=\Dc_{\rho'}
.
$$

From the Heisenberg case
\cite{astengo_diblasio_ricci_07,astengo_diblasio_ricci_08},
we deduce:
\begin{lem}
\label{lem_heis_case}
For any $G\in {\Sc(N')}^{K'}$, 
there exists $\tilde g\in \Sc(\Rb^4)$ 
with $\Gc'G=\tilde g_{|\Sigma_{\Dc'}}$.

Furthermore the mapping
$G\in {\Sc(N')}^{K'}\mapsto\Gc' G\in \Sc(\Sigma_{\Dc'})$
is continuous.
\end{lem}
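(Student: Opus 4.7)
The plan is to reduce the lemma to the Heisenberg case of \cite{astengo_diblasio_ricci_07, astengo_diblasio_ricci_08} for the Gelfand pair $(U_1 \ltimes H_1, U_1)$ via partial Fourier transform in the central variable $x_3$. Using the identification $N' \cong H_1 \times \Rb_{x_3}$, a function $G \in {\Sc(N')}^{K'}$ is $U_1$-invariant in $\tilde x = (x_1, x_2)$ and satisfies the $\Zb_2$-symmetry $G(\tilde x, x_3, t) = G(\tilde x, -x_3, -t)$ arising from the nontrivial coset of $U_1$ in $K' = U_1 \rtimes \Zb_2$. Setting
\[
\hat G(\tilde x, t, \xi) := \int_\Rb G(\tilde x, x_3, t) \, e^{-ix_3 \xi} \, dx_3,
\]
one obtains a Schwartz function on $\Rb^4$ that is $U_1$-invariant in $\tilde x$, inherits the symmetry $\hat G(\tilde x, t, \xi) = \hat G(\tilde x, -t, -\xi)$, and for each fixed $\xi$ belongs to ${\Sc(H_1)}^{U_1}$ with continuous dependence on $\xi$.

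A direct computation with the explicit spherical functions, splitting each cosine by Euler's formula and using the symmetry of $\hat G$ to fold the two resulting integrals into one, yields
\begin{align*}
\Gc' G\big(\mu_{\phi'_{\lambda, l, r}}\big) &= \int_{H_1} \hat G(\tilde x, t, r) \, e^{-i\lambda t} \, \Lc_l\!\Big(\tfrac{\lambda}{2}|\tilde x|^2\Big) \, d\tilde x \, dt, \\
\Gc' G\big(\mu_{\phi'_{\zeta, r}}\big) &= \int_{H_1} \hat G(\tilde x, t, r) \, J_0(\zeta |\tilde x|) \, d\tilde x \, dt.
\end{align*}
These are precisely the Heisenberg Gelfand transform $\Gc_{H_1}[\hat G(\cdot, \cdot, r)]$ evaluated at the spectrum points $(\lambda(2l+1), \lambda)$ and $(\zeta^2, 0)$ of $\Sigma_{H_1} \subset \Rb^2$.

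Applying the Heisenberg isomorphism slice by slice and using the Fr\'echet continuity it contains together with the Schwartz dependence of $\hat G$ on $\xi$, one obtains $h \in \Sc(\Rb^2 \times \Rb)$ with $h(\eta_1^H, \eta_2^H, \xi) = \Gc_{H_1}[\hat G(\cdot, \cdot, \xi)](\eta_1^H, \eta_2^H)$ on $\Sigma_{H_1} \times \Rb_\xi$, depending continuously on $G$. The symmetry of $\hat G$ translates into the identity $\Gc_{H_1}[\hat G(\cdot, \cdot, \xi)](\eta_1^H, \eta_2^H) = \Gc_{H_1}[\hat G(\cdot, \cdot, -\xi)](\eta_1^H, -\eta_2^H)$ on $\Sigma_{H_1}$. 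Since $\Sigma_{H_1}$ is invariant under $\eta_2^H \mapsto -\eta_2^H$, the symmetrization $\tfrac{1}{2}\big(h(\eta_1^H, \eta_2^H, \xi) + h(\eta_1^H, -\eta_2^H, -\xi)\big)$ preserves the restriction and is invariant under $(\eta_2^H, \xi) \mapsto (-\eta_2^H, -\xi)$; replacing $h$ by this average, one may assume the symmetry holds globally. By G.~Schwarz's theorem applied to this $\Zb_2$-action on $\Rb^2$, whose polynomial invariants are generated by $(\eta_2^H)^2$, $\eta_2^H \xi$ and $\xi^2$, one writes $h(\eta_1^H, \eta_2^H, \xi) = H\big(\eta_1^H, (\eta_2^H)^2, \eta_2^H \xi, \xi^2\big)$ for some $H \in \Sc(\Rb^4)$.

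Finally, set $\tilde g(\eta_1, \eta_2, \eta_3, \eta_4) := H(\eta_1 - \eta_4, \eta_2, \eta_3, \eta_4) \in \Sc(\Rb^4)$. On the regular part of $\Sigma_{\Dc'}$, where $(\eta_1, \eta_2, \eta_3, \eta_4) = (\lambda(2l+1) + r^2, \lambda^2, \lambda r, r^2)$ with $\lambda > 0$, substitution gives $\tilde g = h(\lambda(2l+1), \lambda, r) = \Gc' G(\mu_{\phi'_{\lambda, l, r}})$; the singular part $(\zeta^2 + r^2, 0, 0, r^2)$ is handled analogously, using again the symmetry of $\hat G$ to collapse $h(\zeta^2, 0, \pm r)$ to a common value. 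Continuity of $G \mapsto \tilde g \in \Sc(\Sigma_{\Dc'})$ then follows by chaining the continuous operations above. The main obstacle I anticipate is the parametric upgrade of the Heisenberg isomorphism from a single slice to the jointly Schwartz version in $(\eta^H, \xi)$; this rests on the Fr\'echet continuity of the Heisenberg statement together with the nuclearity identification $\Sc(\Sigma_{H_1} \times \Rb) \cong \Sc(\Sigma_{H_1}) \, \widehat{\otimes} \, \Sc(\Rb)$, and is the step requiring most care beyond routine bookkeeping.
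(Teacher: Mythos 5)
The paper itself offers no proof of this lemma: it simply cites the Heisenberg--group results \cite{astengo_diblasio_ricci_07,astengo_diblasio_ricci_08} and records the consequent seminorm estimate~(\ref{lem_heis_case_cont.}). Your argument therefore supplies a missing deduction rather than competing with one. The structure is sound: the identification of the $K'$-invariance as $U_1$-invariance in $\tilde x$ plus the extra symmetry $G(\tilde x,x_3,t)=G(\tilde x,-x_3,-t)$ is correct, the partial Fourier transform $\hat G$ inherits $\hat G(\tilde x,t,\xi)=\hat G(\tilde x,-t,-\xi)$, and folding the two terms of the cosines using this symmetry indeed reduces $\Gc'G$ at both the regular and singular points of $\Sigma_{\Dc'}$ to slice-wise Heisenberg Gelfand transforms of $\hat G(\cdot,\cdot,r)$. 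The symmetrisation of $h$, the use of Schwarz's theorem for the $\Zb_2$-action on $(\eta_2^H,\xi)$ (with generators $(\eta_2^H)^2,\eta_2^H\xi,\xi^2$), and the final substitution $\tilde g(\eta_1,\eta_2,\eta_3,\eta_4)=H(\eta_1-\eta_4,\eta_2,\eta_3,\eta_4)$ all check out against the eigenvalue parametrisations $\bigl(\lambda(2l+1)+r^2,\lambda^2,\lambda r,r^2\bigr)$ and $\bigl(\zeta^2+r^2,0,0,r^2\bigr)$.

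The one point you flag is genuinely the one that needs more than bookkeeping. The Heisenberg result, as stated, produces for each seminorm index $a$ an extension depending on $a$ (this is exactly how~(\ref{lem_heis_case_cont.}) is phrased), so a naive ``apply it for each fixed $\xi$'' does not by itself produce a function measurable in $\xi$, let alone a jointly Schwartz $h\in\Sc(\Rb^3)$. To repair this one must either (i) prove the nuclear identification $\Sc(\Sigma_{H_1})\,\widehat\otimes\,\Sc(\Rb)\cong\Sc(\Sigma_{H_1}\times\Rb)$, which amounts to checking exactness of $\,\widehat\otimes\,\Sc(\Rb)$ on the short exact sequence $0\to\{f:f_{|\Sigma_{H_1}}=0\}\to\Sc(\Rb^2)\to\Sc(\Sigma_{H_1})\to 0$ and identifying the resulting kernel with the ideal of $\Sigma_{H_1}\times\Rb$ in $\Sc(\Rb^3)$; or (ii) rerun the Heisenberg construction explicitly, carrying $\xi$ as a Schwartz parameter throughout. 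Either route is believable but neither is automatic, and since the paper says nothing about it, you should treat this parametric upgrade as the substantive remaining step of the proof rather than as a footnote.
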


Precisely, continuity of the last mapping means that
\begin{eqnarray}
  &\forall a\in\Nb
\quad\exists C=C(a)>0
\quad\exists a'\in\Nb
\quad \forall G\in {\Sc(N')}^{K'}
&\nonumber \\
&\exists \tilde g\in \Sc(\Rb^4)
\quad \tilde g_{|\Sigma_{\Dc'}}=\Gc' G
\quad \nd{\tilde g}_{a,\Rb^4} \leq C \nd{G}_{a',N'}
.&\label{lem_heis_case_cont.}
\end{eqnarray}
Notice that the extension $\tilde g$ depends on the Schwartz semi-norm
$\nd{.}_{a,\Rb^4}$.

\subsection{The Geller-type Lemma}
\label{subsec_geller_lem}

In this subsection, we will state and prove a ``Geller-type Lemma'',
extending \cite{geller_80,astengo_diblasio_ricci_08}.
For this purpose
we will need the following remark.

Let $F\in {\Sc(N)}^K$.
The mapping
$$
R\mapsto \Gc F(R^2,0,0)=
\int F(x,y) e^{-i Rx_1} dx \,dy
,
$$
is a Schwartz even function on $\Rb$;
by Whitney's Theorem, 
 there exists a Schwartz function $f_o\in \Sc(\Rb)$ such that
$$
\forall R\in\Rb\qquad
f_o(R^2)=\Gc F(R^2,0,0)
;
$$
by Hulanicki's Schwartz Kernel Theorem 
or Proposition~\ref{prop_gen_hula},
$f_o(L)$  is a convolution operator with
a $K$-invariant Schwartz kernel 
which we denote by $\Gc F(L,0,0)$
(for brevity reasons, 
in this section 
we will often denote a convolution operator and its kernel by the same symbol).

\begin{prop}[Geller-type Lemma]
  \label{prop_Glem} 
Let $F\in {\Sc(N)}^K$.
There exist 
$F_1\in {\Sc(N)}^K$ 
and 
$F_2\in {\Sc(N)}^K$
satisfying:
  $$
  F
  = \Gc(F)(L,0,0) + \Delta F_1 +D F_2
.
  $$
\end{prop}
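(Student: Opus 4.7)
The plan is to first subtract a convolution kernel to reduce to the case where the Gelfand transform vanishes on the singular set of $\Sigma_\Dc$, and then construct $F_1, F_2$ using the partial Fourier transform in the central variable $y$ together with $SO(3)$-invariance. Since $R \mapsto \Gc F(R^2, 0, 0) = \int F(x,y)\,e^{-iRx_1}\,dx\,dy$ is an even Schwartz function on~$\Rb$, Whitney's theorem provides $f_o \in \Sc(\Rb)$ with $f_o(R^2) = \Gc F(R^2, 0, 0)$, and Proposition~\ref{prop_gen_hula} yields the $K$-invariant Schwartz convolution kernel $M := \Gc(F)(L,0,0) \in {\Sc(N)}^K$ of $f_o(L)$. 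Setting $G := F - M \in {\Sc(N)}^K$, the transform $\Gc G$ vanishes identically on the singular set $\{(\eta_1,0,0) : \eta_1 \geq 0\} \subset \Sigma_\Dc$, so it suffices to produce $F_1, F_2 \in {\Sc(N)}^K$ with $G = \Delta F_1 + D F_2$.

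Consider $\hat G(x,\xi) := \Fc_y G(x,\xi)$, a Schwartz $SO(3)$-invariant function on $\Rb^3 \times \Rb^3$. Using $\phi_{0,R}(x,y) = \sin(R\nn{x})/(R\nn{x})$ and radial Fourier inversion on $\Rb^3$, the vanishing of $\Gc G$ on the singular set translates into $\hat G(x,0) \equiv 0$. By G.~Schwarz's theorem for the diagonal $SO(3)$-action on $\Rb^6$ (with Hilbert basis $(\nn{x}^2, \nn{\xi}^2, x \cdot \xi)$), there exists $H \in \Sc(\Rb^3)$ with $\hat G = H(\nn{x}^2, \nn{\xi}^2, x \cdot \xi)$; writing $(u,v,w)$ for the Hilbert coordinates, the condition $\hat G(\cdot,0)=0$ becomes $H(u,0,0)=0$, so Hadamard's lemma gives $H_1, H_2 \in \Sc(\Rb^3)$ with $H = v H_1 + w H_2$. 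A direct computation from the explicit form of the~$X_j$ yields the Fourier-side symbols $\Fc_y\, \Delta = \nn{\xi}^2$ (multiplication) and $\Fc_y\, D = -i(\xi \cdot \nabla_x)$. Setting $\hat F_1 := H_1(\nn{x}^2, \nn{\xi}^2, x \cdot \xi)$ and $F_1 := \Fc_y^{-1} \hat F_1 \in {\Sc(N)}^K$ handles the first Hadamard piece: $\Fc_y(\Delta F_1) = v\, H_1 \circ \rho$.

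The remaining and main obstacle is to find $F_2 \in {\Sc(N)}^K$ with $\Fc_y(D F_2) = w\, H_2 \circ \rho$. On the $SO(3)$-invariant ansatz $\hat F_2 = K(\nn{x}^2, \nn{\xi}^2, x \cdot \xi)$, this reduces to the first-order linear PDE
\[
(2 w \partial_u + v \partial_w)\, K = i w H_2 \quad \text{on } \Rb^3_{u,v,w},
\]
whose characteristic vector field $Z = 2w\partial_u + v\partial_w$ degenerates on $\{v=0\}$. Formal integration of $ZK = i w H_2$ along the characteristics $\{v = \text{const},\; w^2 - uv = c\}$ produces candidate solutions, but Schwartzness of~$K$ at the degenerate locus requires a moment condition on~$H_2$ of the form $\int_\Rb w\, H_2(\alpha + w^2/v,\,v,\,w)\,dw = 0$ for all $\alpha, v$. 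I would arrange this by exploiting two independent freedoms: (i) the non-uniqueness of the Hadamard decomposition, since $(H_1, H_2) \to (H_1 + wQ,\, H_2 - vQ)$ for any $Q \in \Sc(\Rb^3)$ preserves~$H$; and (ii) the freedom to modify the Schwartz extension~$H$ off the image of~$\rho$, in particular on the set $\{v=0,\; w \neq 0\}$, without changing~$\hat G$. A direct computation shows the moment obstruction vanishes to first order in~$v$ (as $v \to 0^+$, the argument $\alpha + w^2/v$ of~$H_2$ forces its decay), so~$Q$ can be chosen to cancel it; this is the $N_{3,2}$-analog of Geller's mean value identity on the Heisenberg group~\cite{geller_80}. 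With $F_2$ in hand, $\hat G = v \hat F_1 + w\, H_2 \circ \rho = \Fc_y(\Delta F_1 + D F_2)$, hence $G = \Delta F_1 + D F_2$ and $F = \Gc(F)(L,0,0) + \Delta F_1 + D F_2$, as required.
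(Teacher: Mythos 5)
Your reduction to a first-order PDE on the Hilbert coordinates is a genuinely different route from the paper's. The paper transports the problem to $N'\cong H_1\times\Rb$ through $\radon$, uses the Heisenberg Schwartz extension theorem of \cite{astengo_diblasio_ricci_08} to supply vanishing conditions on $I=\radon H$ (namely $\int I\,dt=0$ and $\int I\,dx_3=0$), builds $G_2$ by two explicit antiderivatives, and returns to $N$ via Property~(R). You instead work with $\Fc_y$ directly on $N$, which — if it could be completed — would yield a self-contained argument not resting on the Heisenberg case. The subtraction of $\Gc F(L,0,0)$, the identification $\hat G(x,0)=0$, the Hadamard splitting $H=vH_1+wH_2$, and the $\Delta F_1$ piece via $K_1=H_1$ are all sound, and you compute the symbols $\Fc_y\Delta=\nn\xi^2$, $\Fc_y D=-i\,\xi\cdot\nabla_x$ and the characteristic field $Z=2w\partial_u+v\partial_w$ correctly.

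The gap is exactly where you flag it, and your proposed fix does not close it. Along the characteristic $\{v=v_0,\;w^2-uv_0=c\}$ the obstruction to a decaying solution of $ZK=iwH_2$ is $\int_\Rb s\,H_2\bigl((s^2-c)/v_0,\,v_0,\,s\bigr)\,ds=0$, required for every $v_0>0$ and every $c\le0$. But for $c\le0$ that entire characteristic lies inside $\im\rho$ (since $w^2-uv_0=c\le0$ there and $u=(w^2-c)/v_0\ge0$), so your freedom~(ii) — altering the Schwartz extension of $H$ off $\im\rho$ — is unavailable on precisely the characteristics that carry the constraint. Your freedom~(i) — replacing $(H_1,H_2)$ by $(H_1+wQ,\,H_2-vQ)$ — turns the obstruction into a two-parameter family of moment equations $v_0\int_\Rb s\,Q\bigl((s^2-c)/v_0,v_0,s\bigr)\,ds=\int_\Rb s\,H_2\bigl((s^2-c)/v_0,v_0,s\bigr)\,ds$ in an unknown $Q\in\Sc(\Rb^3)$; whether this system admits a Schwartz solution \emph{is} the mean-value content of the Geller-type lemma, and asserting that the right-hand side ``vanishes to first order in $v$'' neither proves the identity nor produces $Q$. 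To push your direct approach through you would need a genuine proof of this $N_{3,2}$ mean-value formula, which is precisely what the paper sidesteps by quoting the Heisenberg result through $\radon$ and Property~(R).
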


\begin{proof}
Let $F$ be in ${\Sc(N)}^K$ and $G=\radon F\in {\Sc(N')}^{K'}$.
By Lemma~\ref{lem_heis_case}
there exists 
$\tilde g\in \Sc(\Rb^4)$
with $\tilde g_{\sigma_{\Dc'}}=\Gc' G$.
By Proposition~\ref{prop_gen_hula},
the operator given by:
$$
\int_{w=0}^1 \partial_2 \tilde g\left(L, w\Delta,0,0\right)dw
,
$$
is a convolution operator with a $K$-invariant Schwartz kernel
which we denote by $F_1\in {\Sc(N)}^K$.
By spectral calculus,
we have : 
$$
\Delta F_1= \tilde g\left(L,\Delta,0,0\right)-\tilde g\left(L,0,0,0\right)
.
$$

 We will have finished the proof of  Proposition~\ref{prop_Glem}
once we have shown:
\begin{equation}
  \label{eq_exists_F2}
\exists F_2\in {\Sc(N)}^K
\qquad
  F-\Gc F\left(L,0,0\right)-\Delta F_1 = D F_2
\end{equation}

We denote by $H\in {\Sc(N)}^K$ and $I\in {\Sc(N')}^{K'} $ 
the functions given by:
  \begin{eqnarray*}
    H
&=&
F-\Gc F\left(L,0,0\right)-\Delta F_1
    =F-\tilde g\left(L,\Delta,0,0\right)
,\\
    I
&=&
\radon H
=G- \tilde g\left(L',\Delta',0,0\right)
.
  \end{eqnarray*}
  The Gelfand transform of $I$ is given by:
  \begin{equation}
    \label{eq_GT_I}
  \Gc' I \left(\mu_{\phi'}\right)=
  \Gc' G \left(\mu_{\phi'}\right)
  -\tilde g\left(L'\phi'(0),\Delta'\phi'(0),0,0\right)
.  
  \end{equation}
On the singular part of the spectrum, 
(\ref{eq_GT_I}) yields to:
  $$
  \Gc' I \left(\mu_{\phi'_{\zeta,r}}\right)= 
  \tilde g\left(\zeta^2+r^2,0,0,r^2\right)- \tilde g\left(\zeta^2+r^2,0,0,0\right)= 0
,
  $$
  because 
  $\tilde g\left(\zeta^2+r^2,0,0,r^2\right)=\tilde g\left(\zeta^2+r^2,0,0,0\right)$ 
  as $\Gc' G=\Gc F\circ \Pi $;
  this implies:
  \begin{equation}
    \label{eq:I_x_zero}
    \forall x\in \Rb^3\qquad
    \int_\Rb I(x,t) dt =0
.  
  \end{equation}
On the regular part of the spectrum, 
 (\ref{eq_GT_I}) yields to:
  $$
  \Gc' I \left(\mu_{\phi'_{\lambda, l,r}}\right)=
  \tilde g\left(\lambda(2l+1)+r^2, \lambda^2,\lambda r,r^2\right)
  -\tilde g\left(\lambda(2l+1)+r^2, \lambda^2,0,0\right)
,
  $$
  and in particular for $r=0$:
  $$
  \Gc' I \left(\mu_{\phi'_{\lambda, l,0}}\right)=
  \tilde g\left(\lambda(2l+1), \lambda^2,0,0\right)
  -\tilde g\left(\lambda(2l+1), \lambda^2,0,0\right)
  =0
  ;
  $$
  this implies for all $\lambda>0$:
  $$
  \forall l\in\Nb
  \qquad
  \int_{N'} I(x,t) e^{-i\lambda t} \Lc_l\left(\frac \lambda 2 \nn{ x'}^2\right)
  dx dt
  =0
,
  $$
  and $\{\Lc_l\}_{l\in\Nb}$ being an orthogonal basis of $L^2(\Rb^+)$,
  $$
  \forall \tilde x\in\Rb^2
  \qquad
  \int_{\Rb^2} I(\tilde x,x_3;t) e^{-i\lambda t} dx_3 dt
  =0
.
  $$
  Eventually, we get:
  \begin{equation}
    \label{eq:I_x't_zero}
    \forall \tilde x\in\Rb^2
    \quad,\quad
    \forall t\in\Rb
    \qquad
    \int_{\Rb^2} I(\tilde x,x_3;t) dx_3 
    =0
.  
  \end{equation}

  Let us set:
  $$
  G_2(x,t)
  =
  \int_{-\infty}^{x_3}
  \int_{-\infty}^{t}
  I\left(\tilde x, w; s\right)
  ds \, dw
.
  $$
  Because of (\ref{eq:I_x_zero})
  and (\ref{eq:I_x't_zero}),
  we see that $G_2\in {\Sc(N')}^{K'}$.
  Let us show that $\Fc_{x,t}G_2$  (identified with a function on $\Nc'$) 
 satisfies  Property~(R).
We have for $\hat t\not=0$ and $\hat x_3\not=0$:
$$
\Fc_{x,t} G_2(\hat x,\hat t)
=
{(\hat x_3 \hat t)}^{-1}
  \Fc_{x,t} I(\hat x; \hat t ) 
\quad\mbox{and}\quad
Q_{M-1}^{(\Fc_{x,t} G_2)}(\hat x;\hat t)
=
{(\hat x_3 \hat t)}^{-1}
Q_M^{  (\Fc_{x,t} I)}(\hat x;\hat t)
  .
  $$
By Proposition~\ref{prop_im_radon}(ii),
as $I=\radon H$,
$\Fc_{x,t}I$  (identified with a function on $\Nc'$) 
 satisfies  Property~(R),
that is
  $\radial\left(Q_M^{(\Fc_{x,t}I)}\right)$ extends to a smooth $K$-invariant function on $\Nc$ 
  which is a homogeneous polynomial in $y$ of degree $M$,
  with Schwartz coefficients in $x$.
By G.~Schwarz's Theorem,
there exists a function $\tilde Q_M\in C^\infty(\Rb^3)$ 
of the form:
$$
\tilde Q_M(r_1,r_2,r_3)=\sum_{2j_1+j_2=M} c_j(r_1) r_2^{j_1} r_3^{j_2}
\quad,\quad
c_j\in\Sc(\Rb)
,
$$
satisfying:
$$
\radial\left(Q_M^{(\Fc_{x,t}I)}\right)
=\tilde Q_M\circ \rho
.
$$
That is:
$$
Q_M^{(\Fc_{x,t}I)}(\hat x, \hat t)=
\tilde Q_M(\nn{\hat x}^2, \hat t^2,\hat x_3 \hat t)
=\sum_{2j_1+j_2=M} c_j(\nn{\hat x}^2) \hat t^{2j_1} {(\hat x_3 \hat t)}^{j_2}
.
$$
Because of (\ref{eq:I_x't_zero}),
we have:
$$
  \forall \tilde{\hat x}\in\Rb^2
    \quad,\quad
    \forall \hat t\in\Rb
    \qquad
  \Fc_{x,t}I(\tilde{\hat x},0; \hat t)
=
0
,
$$
thus the terme $c_j(\nn{\hat x}^2)$ with $j=(j_1,0)$ is zero:
we can factor out one $(\hat x_3 \hat t)$.
This implies that for $M>0$,
$\radial(Q_{M-1}^{(\Fc_{x,t} G_2)}(\hat x;\hat t))$
extends to a smooth function on $\Nc$ 
  which is a homogeneous polynomial in $y$ of degree $(M-1)$,
  with smooth coefficients in $x$.
Thus $\Fc_{x,t}G_2$ satisfies  Property~(R).
By Proposition~\ref{prop_im_radon}(ii),
  there exists $F_2\in {\Sc(N)}^K$ such that $\radon F_2=G_2$.
 As $D' G_2=I=\radon H$ and $\radon$ being 1-1 on $K$-invariant functions,
we obtain  $D F_2 =H$.
This proves~(\ref{eq_exists_F2}).
\end{proof}

Applying recursively 
Proposition~\ref{prop_Glem},
we obtain $F$ 
as a sum of functions of the form 
$\left(\Gc \left[\Delta^{j_1}D^{j_2}  F_j\right]\right)(L,0,0)$
with a rest.
As the degrees of  homogeneity of the operators $D$ and $\Delta$
with respect to the variable $y$  
are three and four respectively,
we will be interested in a sum over $2j_1+j_2 \leq M$:
\begin{cor}
  \label{cor_Glem} 
  Let $F\in {\Sc(N)}^K$.
  There exists a family $(F_j)_{j\in\Nb^2}$ of Schwartz functions
  $F_j\in {\Sc(N)}^K$ satisfying for any $M\in\Nb$:
  $$
  F- \sum_{2j_1+j_2\leq M}
  \left(\Gc \left[\Delta^{j_1}D^{j_2}  F_j\right]\right)
  (L,0,0)
  =\sum_{2j_1+j_2=M+1}\Delta^{j_1}D^{j_2} F_j
.
  $$
\end{cor}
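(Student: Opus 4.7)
The plan is to prove Corollary~\ref{cor_Glem} by strong induction on $M\in\Nb$, iterating Proposition~\ref{prop_Glem} (the Geller-type Lemma) to construct the family $(F_j)_{j\in\Nb^2}$ of Schwartz $K$-invariant functions. The induction hypothesis at step $M$ is that the sub-family $(F_j)_{2j_1+j_2\leq M+1}$ has been defined and satisfies the $M$-identity.

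For the base case $M=0$, set $F_{(0,0)}:=F$ and apply Proposition~\ref{prop_Glem} to obtain an initial decomposition
\[
F = \Gc F_{(0,0)}(L,0,0) + \Delta\widetilde F + D F_{(0,1)},
\]
with $\widetilde F, F_{(0,1)}\in{\Sc(N)}^K$. The term $D F_{(0,1)}$ is the desired weight-one remainder; the weight-two piece $\Delta\widetilde F$ is recorded as the seed of the contribution $F_{(1,0)}:=\widetilde F$ that will surface when the induction advances beyond $M=0$.

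For the inductive step $M\to M+1$, apply Proposition~\ref{prop_Glem} to each $F_j$ with $2j_1+j_2=M+1$ in the weight-$(M+1)$ remainder, yielding
\[
F_j = \Gc F_j(L,0,0) + \Delta G_j + D H_j \qquad (G_j,H_j\in{\Sc(N)}^K).
\]
Using the strong commutativity of $\Delta$ and $D$ furnished by Proposition~\ref{prop_ess_selfadj}, multiplication by $\Delta^{j_1}D^{j_2}$ gives
\[
\Delta^{j_1}D^{j_2}F_j = \Gc[\Delta^{j_1}D^{j_2}F_j](L,0,0) + \Delta^{j_1+1}D^{j_2}G_j + \Delta^{j_1}D^{j_2+1}H_j.
\]
Summed over all $(j_1,j_2)$ with $2j_1+j_2=M+1$: the first terms join the main sum, completing it to all weights $\leq M+1$; the $D$-branch remainders $\Delta^{j_1}D^{j_2+1}H_j$ sit at the correct weight $M+2$ indexed by $(j'_1,j'_2)=(j_1,j_2+1)$; and the $\Delta$-branch remainders $\Delta^{j_1+1}D^{j_2}G_j$, nominally of weight $M+3$, are folded into the weight-$(M+2)$ slots via the factorization $\Delta^{j_1+1}D^{j_2}G_j = \Delta^{j_1}D^{j_2}(\Delta G_j)$, with $\Delta G_j \in {\Sc(N)}^K$ contributing to the appropriate new component of the family.

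The main obstacle is the combinatorial bookkeeping of weights, since each application of Proposition~\ref{prop_Glem} spawns sub-remainders of two different weights ($+2$ from the $\Delta$-branch and $+1$ from the $D$-branch) that must be reassembled consistently against the graded indexing $\sum_{2j_1+j_2=M+1}$. This is handled by the same telescoping argument as in the Heisenberg case treated in~\cite{astengo_diblasio_ricci_08}, with the scaling $2j_1+j_2$ reflecting the orders of $\Delta$ and $D$ as differential operators in the central variables; the stability of ${\Sc(N)}^K$ under $\Delta$ and $D$ and the continuity statement in Proposition~\ref{prop_Glem} ensure that the family is produced inside ${\Sc(N)}^K$.
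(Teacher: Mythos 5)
Your proposal runs into a genuine problem already at the base case, and the mechanism you invoke to repair the inductive step does not work. Proposition~\ref{prop_Glem} applied to $F=F_{(0,0)}$ yields a \emph{two}-term remainder $\Delta F_1 + D F_2$, whereas the $M=0$ instance of Corollary~\ref{cor_Glem} requires the single-term remainder $D F_{(0,1)}$ (the only index with $2j_1+j_2=1$). You set $F_{(1,0)}:=F_1$ and say the $\Delta$-piece ``will surface when the induction advances,'' but this leaves the stated $M=0$ identity false unless $\Delta F_{(1,0)}=0$, which nothing in Proposition~\ref{prop_Glem} guarantees. In other words, the quantifiers in the Corollary are ``$\exists (F_j)\ \forall M$'': you must exhibit one fixed family for which the identity holds at \emph{every} $M$, including $M=0$.

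The inductive step has the same structural issue, and the proposed fix is not a fix. From the stage-$M$ remainder you apply Proposition~\ref{prop_Glem} to each $F_j$ with $2j_1+j_2=M+1$ and obtain new pieces $\Delta^{j_1+1}D^{j_2}G_j$ (weight $2(j_1+1)+j_2=M+3$) and $\Delta^{j_1}D^{j_2+1}H_j$ (weight $M+2$). The $H$-branch slots in correctly at weight $M+2$. For the $G$-branch you claim the factorization $\Delta^{j_1+1}D^{j_2}G_j = \Delta^{j_1}D^{j_2}(\Delta G_j)$ ``folds it into the weight-$(M+2)$ slots,'' but the prefactor $\Delta^{j_1}D^{j_2}$ in that identity has weight $2j_1+j_2=M+1$, not $M+2$; placing $\Delta G_j$ into the slot $(j_1,j_2)$ would overwrite the already-defined $F_{(j_1,j_2)}$ and invalidate the identity at stage $M$. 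The root difficulty is that a single application of Proposition~\ref{prop_Glem} advances the weight by $1$ (the $D$-branch) \emph{or} by $2$ (the $\Delta$-branch), so iterating with a weight-based stopping rule naturally leaves a remainder spread over weights $M+1$ \emph{and} $M+2$, not concentrated at $M+1$ as the Corollary is stated. You need either to organize the recursion by the depth $j_1+j_2$ (where each application of Proposition~\ref{prop_Glem} moves every leaf forward by exactly one, and then one defines $F_{(j_1,j_2)}=G_{(j_1-1,j_2)}+H_{(j_1,j_2-1)}$ so that the telescoping is exact), or to argue explicitly that the stray weight-$(M+2)$ contributions produced at stage $M$ match the $F_j$'s with $2j_1+j_2=M+2$ that the stage-$(M+1)$ identity commits you to; neither is done in your writeup, and the appeal to ``the same telescoping argument as in the Heisenberg case'' does not by itself close this gap, because in the Heisenberg setting the analogous Geller lemma produces a single remainder branch. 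Finally, a minor point: what you need from $\Delta$ and $D$ in order to pass $\Delta^{j_1}D^{j_2}$ across the decomposition is merely that they commute as elements of the commutative algebra ${\Db(N)}^K$; the strong commutativity of their self-adjoint extensions (Proposition~\ref{prop_ess_selfadj}) is not the relevant input here.
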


\subsection{End of the proof}
\label{subsec_proof_thmgoal}

Here we complete the proof of Theorem~\ref{thm_goal1}.
Let $F\in {\Sc(N)}^K$,
$G=\radon F$, 
$f=\Gc F$,
$g=\Gc' G$,
$F_j$, 
the associated functions by Corollary~\ref{cor_Glem}
(consequence of the Geller-type Lemma),
and
$f_j=\Gc F_j$, $j\in \Nb^2$.
By Lemma~\ref{lem_heis_case}
we choose 
$\tilde g$, $\tilde g_j\in \Sc(\Rb^4)$ 
Schwartz extensions of $g$ and $\Gc' (\radon F_j)$, $j\in \Nb^2$, respectively.
We set
$\tilde g_{\rho'} =\tilde g\circ \rho'$.

Let us fix $M$.
For $\xi=(\tilde \xi,\xi_3)\in \Rb^3$,
setting
$r=\xi_3$
and $\lambda_l=\nn{\tilde \xi}^2/(2l+1)$,
we have $\rho'(\xi,\lambda_l)\in \Sigma_{\Dc'}$
and:
\begin{eqnarray*}
  \tilde g_{\rho'}(\xi,\lambda_l)
  &=&
  \tilde g\circ \rho'(\xi,\lambda_l)
  =
  g(\lambda_l (2l+1)+ r^2, \lambda_l^2, \lambda_l r, r^2)
  \\
  &=&
  f(\lambda_l (2l+1)+ r^2, \lambda_l^2, \lambda_l r)
  \\
  &=&
  \sum_{2j_1+j_2 \leq M}
  \lambda_l^{2j_1} {(\lambda_l r)}^{j_2}
  f_j(\lambda_l (2l+1)+ r^2, 0,0)
  \\
  &&\quad
  +
  \sum_{2j_1+j_2 = M+1}
  \lambda_l^{2j_1} {(\lambda_l r)}^{j_2}
  f_j(\lambda_l (2l+1)+ r^2, \lambda_l^2,\lambda_l r)
.
\end{eqnarray*}
Thus:
\begin{eqnarray*}
  &&\nn{  \tilde g_{\rho'}(\xi,\lambda_l)
    -\sum_{2j_1+j_2 \leq M}
    \lambda_l^{2j_1} {(\lambda_l r)}^{j_2}
    f_j(\lambda_l (2l+1)+ r^2, 0,0)}\\
  &&\qquad
  \leq 
\left(\sum_{2j_1+j_2 = M+1} \nd{\tilde g_j}_{M+1,\Rb^4}\right)
  \nn{\lambda_l}^{M+1}
.
\end{eqnarray*}
This characterises the Taylor expansion of $\tilde g_{\rho'}(\xi,.)$:
for $\xi=(\tilde \xi,\xi_3)$ first with $\tilde \xi\not=0$, and then for all
$\tilde \xi$,
we have:
\begin{eqnarray*}
  Q_M^{(\tilde g_{\rho'})}(\xi,t)
  &=&
  \sum_{2j_1+j_2= M}
  t^{2j_1} {(t \xi_3)}^{j_2}
  f_j(\nn{\xi}^2, 0,0)
  \\
  &=&
  \sum_{2j_1+j_2= M}
  {(\rho'_2(\xi,t))}^{j_1} {(\rho'_3(\xi,t))}^{j_2}
  f_j(\rho'_1(\xi,t), 0,0)
. 
\end{eqnarray*}
This shows that $\tilde g_{\rho'}$ satisfies Property~(R).
By Proposition~\ref{prop_im_restrict},
there exists $f_1\in {\Sc(\Nc)}^K$ such that $\restrict f_1= \tilde g_{\rho'}$
and $f_1=\radial g_{\rho'}$.
By G.~Schwarz's Theorem
(see also \cite[Theorem 6.1]{astengo_diblasio_ricci_08}),
there exists $\tilde f\in \Sc(\Rb^3)$
such that $f_1=\tilde f\circ \rho$.
We have:
$$
\restrict f=\tilde g_{\rho'}= \tilde f\circ \rho_{|\Nc'}=g\circ \rho' 
.
$$
For any point
$s=(\lambda(2l+1)+r^2,\lambda^2,\lambda r)
\in \Sigma_\rho$,
the point
$s'=(s,r^2)\in \Sigma_{\rho'}$ 
is in $\im \rho'$;
it follows that $s$ is in $\im \rho$ and 
$\tilde f(s)=\tilde g(s')=g(s')=f(s)$.
Thus  $\tilde f$ is an extension of $f$.

Now that we have shown that the Gelfand transform of a function
$F\in{\Sc(N)}^K$
admits a Schwartz extension,
we still have to prove the continuity of 
$F\in{\Sc(N)}^K\mapsto \Gc F\in \Sc(\Sigma_\rho)$.
We will use the following two lemmas.
The first one states 
the improvement due to Mather  \cite{mather}
of G.~Schwarz's Theorem
as well as some straightforward consequences:

\begin{lem}
\label{lem_mather}
Let $(\rho_1,\ldots,\rho_q)$ be a minimal and homogeneous Hilbert
basis for~$(\Rb^p,K)$,
and $\rho$ the corresponding Hilbert mapping.

The induced application
$\rho^*:\tilde h\mapsto \tilde h\circ \rho$ 
on $\Sc(\Rb^q)$
is split-surjective,
i.e. it admits  a linear continuous right inverse 
$\sigma:{\Sc(\Rb^p)}^K\rightarrow \Sc(\Rb^q)$
for $\rho^*$,
that is $\rho^*\circ\sigma$ is the identity mapping of ${\Sc(\Rb^p)}^K$.

We fix such $\sigma$.
For any $h\in \Sc(\im\rho)$, 
the function $h\circ \rho$ is well defined and in ${\Sc(\Rb^p)}^K$,
the function $\tilde h=\sigma (h\circ \rho) \in \Sc(\Rb^q)$
defines a Schwartz extension 
which we will call the Mather extension of $h\in \Sc(\im\rho)$.
We have:
$\tilde h\circ \rho = h\circ \rho$.
The  linear mapping 
$\tilde h\longmapsto \tilde h$
of $\Sc(\Rb^q)$
is continuous.  
\end{lem}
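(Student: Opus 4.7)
My plan is to treat Mather's theorem \cite{mather} as a black box and derive the rest of the lemma from it. The split-surjectivity statement, namely that $\rho^*:\Sc(\Rb^q)\to {\Sc(\Rb^p)}^K$ admits a continuous linear right inverse $\sigma$ when $\rho$ comes from a minimal homogeneous Hilbert basis, is exactly Mather's improvement of G.~Schwarz's theorem, so nothing needs to be proved for this part. Fix such a $\sigma$ once and for all.

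The first task is to verify that for $h\in \Sc(\im \rho)$ the pullback $h\circ \rho$ is a well-defined element of ${\Sc(\Rb^p)}^K$ that depends continuously on $h$. Well-definedness is immediate: given two representatives $h_1,h_2\in \Sc(\Rb^q)$ of the class $h$, the difference $h_1-h_2$ vanishes on $\im \rho=\rho(\Rb^p)$, hence $h_1\circ \rho = h_2\circ \rho$. That $h\circ \rho$ is Schwartz and $K$-invariant follows from $\rho$ being a $K$-invariant polynomial map: the chain rule expresses derivatives of $h_1\circ \rho$ as polynomial combinations of $(\partial^\alpha h_1)\circ \rho$, and homogeneity of the $\rho_j$ gives two-sided polynomial bounds relating $|x|$ and $|\rho(x)|$, so Schwartz decay on $\Rb^q$ pulls back to Schwartz decay on $\Rb^p$. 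These observations also make $\rho^*:\Sc(\Rb^q)\to {\Sc(\Rb^p)}^K$ continuous, and since it factors through the quotient defining $\Sc(\im \rho)$, it induces a continuous linear map $h\mapsto h\circ \rho$ from $\Sc(\im \rho)$ to ${\Sc(\Rb^p)}^K$.

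Setting $\tilde h := \sigma(h\circ \rho)\in \Sc(\Rb^q)$ then produces the Mather extension. Because $\sigma$ is a section of $\rho^*$, one has $\tilde h\circ \rho = \rho^*(\tilde h) = h\circ \rho$; since $\rho$ surjects onto $\im \rho$, this says exactly that $\tilde h_{|\im \rho}=h$, so $\tilde h$ is indeed a Schwartz extension of $h$ and the stated identity $\tilde h\circ \rho=h\circ \rho$ holds. Continuity of the map $h\mapsto \tilde h$ is then the composition of the two continuous maps $h\mapsto h\circ \rho$ constructed above and $\sigma$, hence continuous.

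The only nontrivial ingredient is the existence and continuity of $\sigma$, which is where the strength of Mather's theorem over Schwarz's is used; the remaining verifications are routine. The main conceptual difficulty in any direct attack would be precisely this step, as it requires delicate control of how Schwartz decay can be lifted continuously along the orbit map. Here we bypass it entirely by citation.
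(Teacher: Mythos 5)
Your proposal is correct and matches the paper's intent exactly: the paper gives no separate proof of this lemma, presenting it as the statement of Mather's improvement of Schwarz's theorem together with "some straightforward consequences," and your write-up is precisely the verification of those consequences. The only point you leave implicit is that $\im\rho$ is a closed subset of $\Rb^q$ (needed for $\Sc(\im\rho)$ to be defined as in the paper and for the restriction claim to make sense), which follows from properness of a homogeneous Hilbert map ($\rho^{-1}(0)=\{0\}$ plus homogeneity gives $|\rho(x)|\gtrsim |x|^a$ away from the origin) — the same lower bound you already invoke for the Schwartz decay argument.
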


It is easy to check that 
the Hilbert mapping, $\rho$, of $(\Nc,K)$
is minimal and homogeneous.

The second lemma follows 
from $\restrict$ being a 1-1 continuous mapping,
from  the Closed Graph Theorem
and Lemma~\ref{lem_mather}:
\begin{lem}
To any $\tilde g\in \Sc(\Rb^4)$
such that there exists $\tilde f\in\Sc(\Rb^3)$
satisfying $\tilde f\circ \rho_{|\Nc'}=\tilde g\circ \rho'$,
we associate the Mather extension $\tilde f_1$ of $\tilde f$.
The mapping $\tilde g\mapsto \tilde f_1$ is
well-defined, continuous and linear:
\begin{equation}
  \label{eq_tildefM_tildeg}
\forall a\in\Nb
\quad\exists C>0
\quad\exists a'\in \Nb 
\quad
\nd{\tilde f_1}_{a,\Rb^3}
\leq
C \nd{\tilde g}_{a',\Rb^4}
\end{equation}
\end{lem}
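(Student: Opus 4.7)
The plan is to apply the Closed Graph Theorem to the linear map $T:V\to\Sc(\Rb^3)$, $T(\tilde g)=\tilde f_1$, where $V\subseteq\Sc(\Rb^4)$ denotes the set of $\tilde g$ for which the required $\tilde f$ exists, and $\sigma:{\Sc(\Nc)}^K\to\Sc(\Rb^3)$ is the continuous right inverse of $\rho^*$ furnished by Lemma~\ref{lem_mather}. Well-definedness of $T$ follows from the injectivity of $\restrict$ on ${\Sc(\Nc)}^K$: since $\tilde f\circ\rho\in{\Sc(\Nc)}^K$ by G.~Schwarz's Theorem and $\restrict(\tilde f\circ\rho)=\tilde f\circ\rho_{|\Nc'}=\tilde g\circ\rho'$, the function $\tilde f\circ\rho$ is uniquely determined by $\tilde g$, and hence so is $\tilde f_1=\sigma(\tilde f\circ\rho)$. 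Linearity is immediate.

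I next verify that $V$ is closed in $\Sc(\Rb^4)$, so that $V$ itself is a Fr\'echet space. Since $V$ is the preimage of $\restrict({\Sc(\Nc)}^K)\subseteq{\Sc(\Nc')}^{K'}$ under the continuous map $\tilde g\mapsto\tilde g\circ\rho'$, it suffices to see that this image is closed. By Proposition~\ref{prop_im_restrict} the image coincides with the set of $g$ satisfying Property~(R), and one checks that this structural condition on the Taylor coefficients $Q_M^{(g)}$ passes to the limit under Schwartz convergence in~${\Sc(\Nc')}^{K'}$. I expect this closedness verification to be the main technical point; the remaining arguments are essentially formal manipulations of continuous operations.

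It remains to show that the graph of $T$ is closed in $V\times\Sc(\Rb^3)$. Let $(\tilde g_n,T(\tilde g_n))\to(\tilde g,\tilde h)$. The Mather identity $\rho^*\circ\sigma=\id$ from Lemma~\ref{lem_mather} gives $T(\tilde g_n)\circ\rho=\tilde f_n\circ\rho$, and restricting to $\Nc'$, $T(\tilde g_n)\circ\rho_{|\Nc'}=\tilde g_n\circ\rho'$. Passing to the limit by continuity of composition with the polynomial maps $\rho$ and $\rho'$ yields $\tilde h\circ\rho_{|\Nc'}=\tilde g\circ\rho'$, so $\tilde g\in V$ with witness $\tilde h$; injectivity of $\restrict$ then forces the unique associated $\tilde f\circ\rho$ to equal $\tilde h\circ\rho$, whence $T(\tilde g)=\sigma(\tilde h\circ\rho)$. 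Continuity of $\sigma$ applied to $\tilde f_n\circ\rho=T(\tilde g_n)\circ\rho\to\tilde h\circ\rho$ gives $T(\tilde g_n)\to\sigma(\tilde h\circ\rho)=T(\tilde g)$, and so $\tilde h=T(\tilde g)$. The Closed Graph Theorem then produces the estimate~(\ref{eq_tildefM_tildeg}).
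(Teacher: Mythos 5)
Your overall scaffold — apply the Closed Graph Theorem to $T:V\to\Sc(\Rb^3)$, use injectivity of $\restrict$ on ${\Sc(\Nc)}^K$ for well-definedness, and use the Mather section $\sigma$ to express $\tilde f_1=\sigma(\tilde f\circ\rho)$ — is exactly the route the paper indicates (it explicitly cites the $1$-$1$-ness and continuity of $\restrict$, the Closed Graph Theorem, and Lemma~\ref{lem_mather}). Your closed-graph verification is correct: from $\rho^*\circ\sigma=\id$ you get $T(\tilde g_n)\circ\rho=\tilde f_n\circ\rho$, whence $T(\tilde g_n)=\sigma\big(T(\tilde g_n)\circ\rho\big)$, and passing to the limit by continuity of $\rho^*$ (properness of $\rho$), $(\rho')^*$ and $\sigma$ gives $\tilde h=\sigma(\tilde h\circ\rho)=T(\tilde g)$. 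Linearity and well-definedness are also handled correctly.

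The one point you have not actually established is the one on which everything hinges: that $V$ is a \emph{closed} subspace of $\Sc(\Rb^4)$, equivalently that $\restrict\big({\Sc(\Nc)}^K\big)$ is closed in ${\Sc(\Nc')}^{K'}$ — without this, $V$ is not Fr\'echet and the Closed Graph Theorem cannot be invoked. Your proposed route (``Property~(R) passes to the limit'') is not obviously true as stated: Property~(R) asserts the existence of polynomial extensions $P_M^{(n)}$ of $\radial\big(Q_M^{(g_n)}\big)$, and convergence of the $g_n$ does not by itself force these a priori unrelated families $P_M^{(n)}$ to converge; you would first have to observe that each $P_M^{(n)}$ is uniquely determined by $Q_M^{(g_n)}$ (the decomposition $h_M(x)=\sum_{2j_1+j_2=M}c_j(|x|^2)x_3^{j_2}$ is unique, by a vanishing argument for polynomials in $x_3$ with $u=|x|^2$ ranging over $[x_3^2,\infty)$) and then control the map $h_M\mapsto(c_j)_j$. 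A cleaner way to fill the gap, more in the spirit of the paper, is to note that the proof of Proposition~\ref{prop_im_restrict} in fact produces a uniform bound $\nd{\radial(g)}_{a,\Nc}\le C\nd{g}_{a',\Nc'}$ for $g\in\restrict\big({\Sc(\Nc)}^K\big)$ (the estimate on $\partial_y^{I_o}\radial(g)$ via~(\ref{eq_partial_y_I}) and the Taylor remainder~(\ref{eq_taylor_formula}) involves only finitely many Schwartz seminorms of $g$, and the only use of the polynomials $P_j$ is that $\partial_y^{I_o}P_j=0$ by degree count). This gives continuity of $\radial$ on the image directly; a Cauchy-sequence argument then yields closedness of the image, and one can even dispense with the Closed Graph Theorem for $T$ altogether, since $T=\sigma\circ\radial\circ(\rho')^*$ becomes a composition of continuous maps. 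So your approach is the intended one, but the closedness step you flag is a genuine gap that must be filled, and your suggested route for it needs the uniqueness/continuity analysis sketched above.
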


Let $a_o\in \Nb$.
Let $a_1$ corresponding to $a'$ in~(\ref{eq_tildefM_tildeg})
for $a=a_o$.

Let $F\in{\Sc(N)}^K$, $G=\radon F$, $f=\Gc F$, $g=\Gc' G$.
By~(\ref{lem_heis_case_cont.}),
there exists $a_2\in\Nb$ 
such that we have indepently of~$G$:
$$
\nd{\tilde g}_{a_1,\Rb^4}
\leq 
C \nd{G}_{a_2,N}
.
$$
As $\radon$ is continuous,
there exists $a_3\in\Nb$ such that we have indepently of~$F$:
$$
\nd{\radon F}_{a_2,N}
\leq 
C \nd{F}_{a_3,N}
.
$$
Thus we have:
$$
\nd{\tilde f_1}_{a_o,\Rb^3}
\leq
C_1 \nd{\tilde g_o}_{a_1,\Rb^4}
\leq
C_2 \nd{G}_{a_2,N'}
\leq
C_3 \nd{F}_{a_3,N}
.
$$
Notice that $a_3$ and $C_3$ depend only on $a_o$,
and that $\tilde f_1$ depends 
on $F$ and also on $a_o$
because $\tilde g_o$ depends on $a_1$.

\appendix 

\section{ }

We adopt again the notation of
Section~\ref{sec_results}
and assume that $(K\ltimes N,K)$ is a Gelfand pair.
Here we give the proofs 
of Propositions~\ref{prop_ess_selfadj}, \ref{prop_joint_spectrum} and \ref{prop_gen_hula}.

\subsection{Proof of Proposition~\ref{prop_ess_selfadj}} 
\label{subsec_proof_selfadj}

This proof is an easy  generalisation of 
\cite[Lemma 5.3]{astengo_diblasio_ricci_08}
which is a similar result given in the case of the Heisenberg group,
using \cite{benson_jenkins_ratcliff_1990}.

Let us check that the operators of $\Db(N)$ are symmetric.
In fact, 
$C_c^\infty(N)$ is equipped with the Hilbert inner product
$\langle F_1,F_2 \rangle=\int_N F_1(n) \bar F_2(n) dn$.
For any $D=D_P\in\Db(N)$, $P\in\Pc(\Nc)$,
we have $\langle D F_1,F_2 \rangle =\langle F_1, D F_2 \rangle$
because:
\begin{eqnarray}
\langle D F_1,F_2 \rangle
&=&
{\left[P(i^{-1}\partial_u)
\int_N  F_1(n\exp (\sum_{j=1}^p u_j E_j)) \bar F_2(n) dn\right]}_{|u=0}
\nonumber \\
&=&
{\left[P(i^{-1}\partial_u)
\int_N  F_1(n_1)) \bar F_2(n_1\exp (-\sum_{j=1}^p u_j E_j)
dn\right]}_{|u=0}
\label{eq1_subsec_proof_selfadj}
\end{eqnarray}
after the change of variable
$n_1=n\exp (\sum_{j=1}^p u_j E_j)$.

Let us recall some facts about Gelfand pairs of the form 
$(K\ltimes N,K)$
\cite{benson_jenkins_ratcliff_1990}.
Let $\hat N$ be the set of (the classes of) unitary representations on~$N$.
For each $\pi\in \hat N$, 
let $K_\pi$ be the stabilizer of $\pi$ in~$K$. 
There exists a decomposition 
of the Hilbert space $\Hc_\pi$
into finite-dimensional irreducible subspaces $\Hc_{\pi,\alpha}$
under the projective action of $K_\pi$ on $\Hc_\pi$.
Each bounded $K$-spherical function $\phi$ on $N$  is in 1-1 correspondence
with $\pi$ and $\alpha$,
in the sense that $\phi=\phi_{\pi,\alpha}$ can be written as:
$$
\phi(n)=\int_K\langle \pi(kn) u, u\rangle dk
,
$$
where $u$ is any unit vector in $\Hc_{\pi,\alpha}$
(and $dk$ the Haar probability measure of $K$). 
Let $D\in {\Db(N)}^K$. 
For each $\pi\in \hat N$,
each subspace $\Hc_{\pi,\alpha}$
is an eigenspace for the operator $d\pi(D)$ 
and its eigenvalue is $ \mu_{\pi,\alpha,D}$ satisfies:
$ D\phi_{\pi,\alpha}=\mu_{\pi,\alpha,D}\phi_{\pi,\alpha}$.

Note that
the trace $\tr_{\Hc_{\pi}}$ of operators on $\Hc_\pi$
can be computed
as the sum over $\alpha$ of traces $\tr_{\Hc_{\pi,\alpha}}$ of operators on $\Hc_{\pi,\alpha}$.

We denote by  $\beta$ the Plancherel measure on $\hat N$:
$$
\nd{F}_2^2=
\int_{\hat N} \tr_{\Hc_\pi} \left[\pi(F){\pi(F)}^*\right] d\beta(\pi)
\quad,\quad
F\in C_c^\infty(N)
.
$$

Now let us prove Proposition~\ref{prop_ess_selfadj}.

Let $D\in {\Db(N)}^K$. 
It is easy to see that 
there exists a unique self-adjoint extension of~$D$,
whose  domain is the space of function $F\in L^2(N)$
satisfying:
$$
\int_{\hat N} 
\sum_\alpha 
\nn{\mu_{\pi,\alpha,D}}^2 
\tr_{\Hc_{\pi,\alpha}} \left[ \pi(F){\pi(F)}^*\right] 
d\beta(\pi)
<\infty
.
$$
Let us also denote by $D$ the self-adjoint extension.
Following \cite[Lemma~5.3]{astengo_diblasio_ricci_08},
we construct a realisation $E=E_D$ of the spectral resolution of $D$
in the following way.
Given $\omega$ a Borel  subset of $\Rb$,
we define the operator $E(\omega)$ on $L^2(N)$ by:
$$
\pi(E(\omega)F)
=\sum_\alpha \chi_\omega(\mu_{\pi,\alpha,D})
\pi(F)\Pi_{\pi,\alpha}
,
$$
where $\chi_\omega$ is the characteristic function of $\omega$ 
and $\Pi_{\pi,\alpha}$ the orthogonal projection of $\Hc_\pi$ onto
$\Hc_{\pi,\alpha}$.
Then $E=\{E(\omega)\}$ defines a resolution of the identity,
and for $F\in \Sc(N)$,
$$
\int_\Rb \xi dE(\xi)F= D F
.
$$
Therefore $E=E_D$ is the spectral resolution of $D$.

One readily checks that if $D_1,D_2\in {\Db(N)}^K$,
then for any Borel sets $\omega_1$, $\omega_2$,
the operators $E_{D_1}(\omega_1)$ and $E_{D_2}(\omega_2)$ 
commute.

\subsection{Proof of Proposition~\ref{prop_joint_spectrum}}

Let us recall the definition of the joint spectrum of a given strongly commuting family of
self-adjoint operators $T_1,\ldots, T_q$ (densily defined) on a Hilbert space~$\Hc$:
it is the set $S_{T_1,\ldots,T_q}$ of the $q$-tuples $\mu=(\mu_1,\ldots,\mu_q)\in \Rb^q$
for which there do not exist bounded operators $U_1,\ldots,U_q$ on $\Hc$ 
satisfying:
$$
\sum_{j=1}^q (\mu_j-T_j)U_j=
\sum_{j=1}^q U_j(\mu_j-T_j)=
\id_\Hc
.
$$

Let $(\rho_1,\ldots,\rho_q)$ be an ordered Hilbert basis for $(\Nc,K)$
and $\Dc_\rho$ the associated family of strongly commuting
self-adjoint operators on $L^2(N)$.

For each $\pi\in\hat N$,
we decompose its Hilbert space
$\Hc_\pi=\oplus^\perp_\alpha \Hc_{\pi,\alpha}$ 
as in the proof of Proposition~\ref{prop_ess_selfadj} 
in Section~\ref{subsec_proof_selfadj}
and we have for $j=1,\ldots,q$:
$$
{d\pi(D_{\rho_j})}_{|\Hc_{\pi,\alpha}}=\mu_j(\phi_{\pi,\alpha}) \id_{\Hc_{\pi,\alpha}}
.
$$
This implies the inclusion $\Sigma_{\Dc_\rho}\subset S_{\Dc_\rho}$.

For the converse inclusion, 
we will need the following Lemma,
an easy consequence of the Plancherel formula:
\begin{lem}
\label{lem_cq_plancherel}
If a function $m$ is  continuous and compactly-supported on the Gelfand spectrum,
then there exists a $K$-invariant function $M\in {L^2(N)}^K$ 
whose Gelfand transform is $m$. 
Furthermore, 
the convolution operator with kernel $M$ defined on~$C_c^\infty(N)$ 
extends to a bounded operator on $L^2(N)$ with operator norm~$\sup \nn{m}$.
\end{lem}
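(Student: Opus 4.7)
The plan is to build $M$ via the operator-valued Plancherel inversion for $N$, by prescribing its Fourier transform $\pi(M)$ on each $\pi\in\hat N$ in the block-diagonal form forced by $K$-invariance. For each $\pi\in\hat N$, decomposing $\Hc_\pi=\oplus^\perp_\alpha \Hc_{\pi,\alpha}$ as in Subsection~\ref{subsec_proof_selfadj}, I set
$$
T_\pi\;:=\;\sum_\alpha m(\mu_{\pi,\alpha})\,\Pi_{\pi,\alpha},
$$
where $\mu_{\pi,\alpha}\in\Sigma_\Dc$ denotes the joint eigenvalue attached to $\phi_{\pi,\alpha}$. Since the $\Pi_{\pi,\alpha}$ are pairwise orthogonal and $m$ is bounded, $T_\pi$ is bounded with $\nd{T_\pi}\le\sup\nn{m}$, and by construction it acts as a scalar on each $K_\pi$-irreducible component $\Hc_{\pi,\alpha}$, hence commutes with the (projective) $K_\pi$-action on $\Hc_\pi$.

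First I would verify that $\pi\mapsto T_\pi$ is a measurable field and that
$$
\int_{\hat N}\tr_{\Hc_\pi}\!\left[T_\pi T_\pi^*\right]d\beta(\pi)\;=\;\int_{\hat N}\sum_\alpha \nn{m(\mu_{\pi,\alpha})}^2\dim\Hc_{\pi,\alpha}\,d\beta(\pi)\;<\;\infty.
$$
The right-hand side equals $\int_{\Sigma_\Dc}\nn{m}^2\,d\nu$, where $\nu$ is the spherical Plancherel measure of the Gelfand pair, i.e.\ the pushforward of $\sum_\alpha \dim\Hc_{\pi,\alpha}\,\delta_{\mu_{\pi,\alpha}}\,d\beta(\pi)$ under $(\pi,\alpha)\mapsto\mu_{\pi,\alpha}$; since $\nu$ is locally finite on $\Sigma_\Dc$ and $m$ is continuous with compact support, the integral is finite. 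The operator-valued Plancherel inversion then produces a unique $M\in L^2(N)$ with $\pi(M)=T_\pi$ for $\beta$-a.e.\ $\pi$, and the $K_\pi$-equivariance of each $T_\pi$ translates into the $K$-invariance of $M$.

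Next I would identify the Gelfand transform: substituting $\phi_{\pi,\alpha}(n)=\int_K\langle\pi(kn)u,u\rangle\,dk$ for $u$ a unit vector in $\Hc_{\pi,\alpha}$ into $\int_N M\,\bar\phi_{\pi,\alpha}$, using $K$-invariance of $M$ to absorb the $K$-integration, and invoking $\pi(M)u=m(\mu_{\pi,\alpha})u$, I obtain $\Gc M(\mu_{\pi,\alpha})=m(\mu_{\pi,\alpha})$, i.e.\ $\Gc M=m$. For the operator norm, for $F\in C_c^\infty(N)$ the Plancherel identity together with $T_\pi T_\pi^*=\sum_\alpha\nn{m(\mu_{\pi,\alpha})}^2\Pi_{\pi,\alpha}\le (\sup\nn{m})^2\,\id_{\Hc_\pi}$ gives
$$
\nd{F*M}_2^2\;=\;\int_{\hat N}\tr\!\left[\pi(F)\,T_\pi T_\pi^*\,\pi(F)^*\right]d\beta(\pi)\;\le\;(\sup\nn{m})^2\,\nd{F}_2^2,
$$
so $F\mapsto F*M$ extends to $L^2(N)$ with operator norm at most $\sup\nn{m}$. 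For the matching lower bound I would pick $\mu_o\in\Sigma_\Dc$ with $\nn{m(\mu_o)}$ close to the supremum and construct $F\in C_c^\infty(N)$ whose Plancherel mass concentrates on pairs $(\pi,\alpha)$ with $\mu_{\pi,\alpha}$ near $\mu_o$, exploiting the continuity of $m$.

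The principal obstacle is the $L^2$-integrability step, which hinges on local finiteness of the spherical Plancherel measure on $\Sigma_\Dc$; once that ingredient is in hand, everything else reduces to a routine unwinding of the operator-Plancherel bookkeeping.
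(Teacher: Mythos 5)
The paper treats this lemma as ``an easy consequence of the Plancherel formula'' without writing out a proof, so there is no argument on the paper's side to compare yours with step by step; but your outline is exactly the intended fleshing-out of that remark, built on the operator-valued Plancherel decomposition used throughout the appendix. Two comments on the points you flag as unresolved or as ``principal obstacles.''

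First, the local finiteness you worry about is not an open ingredient: it is the content of the Godement--Plancherel theorem for Gelfand pairs. For any Gelfand pair there is a unique positive \emph{regular Borel} measure $\nu$ on the Gelfand spectrum with $\nd{F}_{L^2}^2=\int \nn{\Gc F}^2\,d\nu$ for $F$ in the usual dense subspace of $K$-bi-invariant compactly supported functions; regularity on a locally compact Hausdorff space gives finiteness on compacta for free. So once you accept that spherical-Plancherel machinery (which the paper is visibly presupposing when it says ``easy consequence of the Plancherel formula''), your $L^2$-integrability step is immediate and you do not even need to route it through the group-side measure $\beta$ and the identification $\nu=\text{pushforward of }\sum_\alpha\dim\Hc_{\pi,\alpha}\,\delta_{\mu_{\pi,\alpha}}\,d\beta(\pi)$. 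In fact you could shortcut the whole construction: $m$ continuous with compact support lies in $L^2(\nu)$, the Plancherel isometry gives you $M\in{L^2(N)}^K$ with $\Gc M=m$, and the $L^2$-bound for the convolution is the standard functional-calculus estimate. Your more explicit route through $T_\pi=\sum_\alpha m(\mu_{\pi,\alpha})\Pi_{\pi,\alpha}$ is perfectly sound and has the virtue of matching the formalism the paper sets up in the proof of Proposition~\ref{prop_ess_selfadj}; it is just longer than necessary.

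Second, on the operator norm: your upper bound $\le\sup\nn{m}$ is correct and is the only direction actually used in the proof of Proposition~\ref{prop_joint_spectrum} (``with norm less than $C2^{-a}$''). Your proposed lower bound argument, concentrating Plancherel mass near a point where $\nn{m}$ is close to its supremum, will give $\nd{F\mapsto F*M}=\nd{m}_{L^\infty(\nu)}$, which equals $\sup_{\Sigma_\Dc}\nn{m}$ only if $\nu$ has full support in $\Sigma_\Dc$. That full-support property does hold for nilpotent Gelfand pairs, but it is an extra fact you would need to invoke; the paper can afford to be cavalier about the exact equality precisely because only the inequality is used downstream. Your proof would be cleaner if you either cited the full-support property explicitly or, more simply, only claimed the inequality.
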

We will also use a dyadic decomposition on $\Rb^+$:
there exists a smooth, non-negative function $\psi$, 
supported in the interval $[\frac 12, 2]$ and satisfying:
$$
\forall x>0
\qquad
\sum_{a\in \Zb} \psi(2^{-a}x)=1
.
$$
We set $\psi_a(x)=\psi(2^{-a}x)$ if $a\geq1$,
and $\psi_o(x)=\sum_{a<1}\psi(2^{-a}x)$.

Let $\mu^o=(\mu_1^o,\ldots,\mu_q^o)\in \Rb^q\backslash\Sigma_{\Dc_\rho}$.
We define the fonctions $m_{a,j}$, $a\geq 0$, $j=1,\ldots,q$ by:
$$
m_{a,j}(\mu)
=
\frac{ \mu_j^o-\mu_j}{ \sum_{j'=1}^q {\mu_{j'}^o}^2 -  \mu_{j'}^2} 
\psi_a (\nn{\mu})
\quad,\quad
\mu\in\Sigma_{\Dc_\rho}
.
$$
Each function $m_{a,j}$ is continuous and compactly supported in
$\Sigma_{\Dc_\rho}$ 
and because $\mu^o$ is not in the closed set $\Sigma_{\Dc\rho}$,
there exists a constant $C=C(\mu^o)>0$, 
independent of $a$ and $j$, 
such that:
$$
\sup_{\mu\in \Sigma_{\Dc_\rho}}\nn{m_{a,j}(\mu)} \leq C 2^{-a}
.
$$
We denote by $U_{a,j}$ the convolution operator 
whose kernel admits $m_{a,j}$ as Gelfand transform;
by Lemma~\ref{lem_cq_plancherel},
this operator is bounded on~$L^2(N)$ with norm less than $C 2^{-a}$.
The operator $\sum_{a\geq 0} U_{a,j}$ is thus also a bounded operator
on $L^2(N)$, which we denote by $U_j$.
We check that for any representation $\pi\in\hat N$, 
we have on each subspace $\Hc_{\pi,\alpha}$:
$$
{\pi(U_j)}_{|\Hc_{\pi,\alpha}}
=
\frac{ \mu_j^o-\mu_j(\phi_{\pi,\alpha})}{ \sum_{j'=1}^q {\mu_{j'}^o}^2
  -  {\mu_{j'}(\phi_{\pi,\alpha})}^2} 
,
$$
from which we deduce:
$$
\sum_{j=1}^q
\pi(U_j)\left(\mu_j^o-\pi(D_j)\right)
=
\sum_{j=1}^q
\left(\mu_j^o-\pi(D_j)\right)\pi(U_j)
=\id_\Hc
.
$$
This implies:
$$
\sum_{j=1}^q
U_j\left(\mu_j^o-D_j\right)
=
\sum_{j=1}^q
\left(\mu_j^o-D_j\right)U_j
=
\id_{L^2(N)}
,
$$
that is, $\mu^o$ is not in the joint spectrum $S_{D_\rho}$.

This shows the inclusion $\Sigma_{\Dc_\rho}\supset S_{\Dc_\rho}$
and concludes the proof of Proposition~\ref{prop_joint_spectrum}.

\subsection{Proof of Proposition~\ref{prop_gen_hula}} 
\label{subsec_proof_prop_gen_hula}

With the Plancherel formula
(see proof of Proposition~\ref{prop_ess_selfadj}
in Subsection~\ref{subsec_proof_selfadj}),
it is easy to see that 
if $m\in \Sc(\Rb^d)$ and if $m(\Dc_\rho)$ is a convolution operator
whose kernel is $M\in {\Sc(N)}^K$,
then the Gelfand transform of $M$ coincides with $m$
on $\Sigma_{\Dc_\rho}$.

The proof of the rest of Proposition~\ref{prop_gen_hula}
relies mainly on the generalisation
 \cite[Theorem 5.2]{astengo_diblasio_ricci_08}
of Hulanicki's Schwartz Kernel Theorem.

We will also use the following Lemma
which is well-known to specialists, 
but for which we were not able to find a written proof:
\begin{lem}
  \label{lem_folland_stein}
Let $N$ be a graded Lie group, 
$\Nc=\Vc_1\oplus \Vc_2\oplus \ldots \Vc_l$
its graded Lie algebra,
${(X_i)}$ a basis of $\Vc_1$,
$L=-\sum X_i^2$ the associated sub-Laplacian.

For any homogeneous left-invariant differential operator $D$ on $N$ 
of degree $2d$,
there exists a constant $C=C(D)>0$
such that we have:
\begin{equation}
  \label{eq_lem_folland_stein}
\forall F\in C_c^\infty (N)
\quad
\nd{ DF}_2\leq  C \nd{L^d F}_2
.  
\end{equation}
Furthermore 
 $\tilde D= 2CL^d -D$ 
is a positive Rockland operator on~$N$.
\end{lem}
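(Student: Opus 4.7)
My plan is to prove the two claims of the lemma by exploiting that $L$ is a positive Rockland operator of homogeneous degree $2$: the norm inequality will follow from the Folland--Stein theory of subelliptic estimates on graded groups, and the positive Rockland property will then be obtained by upgrading it to a quadratic-form inequality via complex interpolation.

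For the norm inequality (\ref{eq_lem_folland_stein}), I would first use the spectral calculus of the positive self-adjoint operator $L$ on $L^2(N)$ to define the fractional inverse $L^{-d}$ as a convolution operator whose distributional kernel is smooth off the identity and homogeneous of degree $2d-Q$, where $Q=\sum j\dim\Vc_j$ is the homogeneous dimension of $N$. Applying the homogeneous left-invariant differential operator $D$ of degree $2d$ to this kernel yields a convolution kernel smooth off the identity and homogeneous of the critical degree $-Q$, and the Folland--Stein theory of singular integrals on homogeneous groups shows that this convolution extends to a bounded operator on $L^2(N)$ with operator norm $C_0=C_0(D)$. Setting $G=L^d F$ for $F\in C_c^\infty(N)$ gives $\nd{DF}_2=\nd{(DL^{-d})G}_2\leq C_0\nd{L^d F}_2$, which is (\ref{eq_lem_folland_stein}) with $C=C_0$.

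For the positive Rockland claim --- which implicitly assumes $D$ symmetric, as is the case in the application to Proposition~\ref{prop_gen_hula} --- I would upgrade the norm inequality to a quadratic-form inequality via Stein complex interpolation on the analytic operator family $T(z)=L^{-zd}DL^{-(1-z)d}$ on the strip $0\leq \mathrm{Re}\, z\leq 1$. Since $L^{iy}$ is unitary on $L^2(N)$ by the spectral theorem, $\nd{T(iy)}_{L^2\to L^2}\leq C_0$ for all real $y$, and by taking adjoints (using symmetry of $D$) the same bound holds at $\mathrm{Re}\, z=1$. Stein's interpolation then yields $\nd{L^{-d/2}DL^{-d/2}}_{L^2\to L^2}\leq C_0$. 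For $F\in C_c^\infty(N)$ with $G=L^{d/2}F$ this gives $|\langle DF,F\rangle|=|\langle L^{-d/2}DL^{-d/2}G,G\rangle|\leq C_0\nd{G}_2^2=C_0\langle L^d F,F\rangle$. Choosing $C>C_0/2$ yields $\langle \tilde D F,F\rangle\geq (2C-C_0)\langle L^d F,F\rangle\geq 0$, so $\tilde D$ is positive as a quadratic form on $C_c^\infty(N)$. The quadratic-form inequality $\tilde D\geq (2C-C_0)L^d$ transfers through the correspondence between group-level convolution operators and representations to $d\pi(\tilde D)\geq (2C-C_0)d\pi(L)^d$ on smooth vectors, for every non-trivial irreducible unitary representation $\pi$ of $N$. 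Since $L$ is Rockland, $d\pi(L)$ has trivial kernel on $\Hc_\pi^\infty$, and therefore so does $d\pi(\tilde D)$, establishing the Rockland condition.

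The main obstacle is making the Stein interpolation step rigorous: one must verify the analyticity of $z\mapsto \langle T(z)F,G\rangle$ on a dense subspace and appropriate sub-exponential growth in the imaginary direction, both requiring careful handling of the spectral calculus of $L$ and the interaction of $L^{iy}$ with the differential operator $D$. An alternative, more conceptual route would be to invoke a pseudodifferential calculus on graded groups (of the type developed by Fischer--Ruzhansky), which renders the boundedness of $L^{-d/2}DL^{-d/2}$ transparent via symbolic composition, at the cost of substantially more machinery.
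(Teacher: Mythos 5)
Your kernel argument for the $L^2$ inequality matches the paper's in spirit, but the claim that the convolution kernel of $L^{-d}$ is homogeneous of degree $2d-Q$ is only correct when $2d<Q$. When $2d\ge Q$, the fundamental solution of $L^d$ also contains a $P(x)\log\nn{x}$ term (Geller), and one must check separately that after taking $2d$ derivatives the logarithm disappears and the resulting kernel is again homogeneous of degree $-Q$; the paper treats this case explicitly. You also omit the mean-value/cancellation condition needed for a kernel of the critical degree $-Q$ to be of type $0$; the paper obtains it for free by noting the kernel is one further derivative of a locally integrable homogeneous function of degree $-Q+1$.

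Your Rockland argument takes a different route from the paper's and has a real gap. The paper factors $\pi(\tilde D)=2C\,\pi\bigl(I-\tfrac12 D(CL^d)^{-1}\bigr)\,\pi(L)^d$ and gets injectivity from invertibility of the middle factor (its $L^2(N)$ operator norm is $\le\tfrac12$, and, $N$ being amenable, that bound transfers to every $\Hc_\pi$) together with $L$ being Rockland; no positivity estimate enters. Your route through $d\pi(\tilde D)\ge(2C-C_0)\,d\pi(L)^d$ rests on transferring a quadratic-form inequality between \emph{differential} operators from $L^2(N)$ to every $\pi\in\hat N$; invoking ``the correspondence between group-level convolution operators and representations'' is not adequate, since $\tilde D$ and $L^d$ are not convolution operators, and the degenerate representations (of Plancherel measure zero) need a separate limiting argument. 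On the other hand, your Stein-interpolation step $\nd{L^{-d/2}DL^{-d/2}}\le C$ does address a point the paper leaves undiscussed: it asserts positivity of $\tilde D$ on $C_c^\infty(N)$, which does not follow from Cauchy--Schwarz alone (that gives only $\nn{\langle DF,F\rangle}\le C\nd{L^dF}_2\nd{F}_2$, which does not dominate $2C\nd{L^{d/2}F}_2^2$), so some interpolation-type bound really is needed. You also rightly flag that the positive Rockland claim presupposes $D$ symmetric, which the lemma statement leaves implicit but which holds in its application.
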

\begin{proof}
We refer to \cite[ch.6.A]{folland_stein_bk} 
for the definition and the properties 
of kernels of type $\alpha\in [0,Q[$,
where $Q$ is the homogeneous dimension of the group.
We will also use the fact that 
the sub-Laplacian, $L$, has a fundamental solution,
- $L$ being a homogeneous positive Rockland operator of order two -
and that the same is true for~$L^d$, $d=1,2,\ldots$.
Let us denote $G_d$ a fundamental solution of $L^d$.
For $2d<Q$, $G_d\in C^\infty(N \backslash \{0\})$ is homogeneous of
degree $2d-Q$
\cite{folland_1975}.

For any composition of left-invariant vector fields $X^I=X_{i_1}X_{i_2}\cdots X_{i_k}$ with $k< 2d$,
it is easy to check that
$X^I G_d\in C^\infty(N \backslash \{0\})$ 
is a homogeneous function $H$ of degree $-Q+1$, 
smooth away from the origin. 
One further differentiation gives a homogeneous distribution of degree~$-Q$. 
Being a derivative, 
it automatically satisfies the cancellation condition~(63) of~\cite[ch.XIII.5.3]{stein_bible}. 
In fact, let $\phi$ be a function supported on the unit ball and
normalized in the $C^1$-norm. 
For any $X\in\Vc_1$ and $r>0$,
$$
\langle XH,\phi(r\cdot)\rangle=-r\int_N H(x)X\phi(rx)\,dx=-\int_N
H(x)X\phi(x)\,dx
,
$$
which is bounded independently of $\phi$ and $r$.

This implies that for every $I$ of length $2d$, the kernel $X^I G_d$ 
satisfies the $L^2$-boundedness condition (6.3) of
\cite[ch.6.A]{folland_stein_bk},
and thus is of type~0.
The operator $X^I L^{-d}$ being $L^2(N)$-bounded,
we have:
\begin{equation}
  \label{eq_lem_folland_stein_XI}
\forall F\in C_c^\infty (N)
\quad
\nd{X^I F}_2\leq C\nd{L^d F}_2
.  
 \end{equation}

If $2d\ge Q$, 
$L^d$ does not have a homogeneous fundamental solution, 
but, according to \cite{geller_1983}, 
it has a fundamental solution $G_d$ 
which is the sum of two terms, 
one homogeneous of degree $2d-Q$, 
and the other of the form $P(x)\log|x|$, 
where $P$ is a polynomial, 
homogeneous of degree $2d-Q$, 
and $|x|$ is any smooth homogeneous norm on $N$. 
This implies that, 
if the length $k$ of $I$ satisfies $2d-Q<k<2d$, 
then $X^IG_d$ is a homogeneous function of degree $-Q+2d-k$. 
We can then repeat the previous argument to conclude that (\ref{eq_lem_folland_stein_XI}) holds for every $d$.

Let $D$ be a homogeneous left-invariant differential operator on~$N$ 
of degree $2d$.
As $D$ can be written as a linear combination of monomials $X^I$, with $I$ of degree $2d$,
we see that the property
(\ref{eq_lem_folland_stein_XI}) implies (\ref{eq_lem_folland_stein}).
Let $C=C(D)$ be the $L^2$-operator norm of $DL^{-d}$. 
In particular the $L^2(N)$-norm of the operator $D {(C L^d)}^{-1}$ is one
and $I-\frac 12 D {(C L^d)}^{-1}$ is an invertible operator on $L^2(N)$.
The differential operator $\tilde D= 2C L^d -D$
is a $2d$-homogeneous,
left-invariant, 
symmetric and positive on $C_c^\infty(N)$.
To finish the proof,
it remains to prove the defining property of Rockland operators,
that is, 
for any non-trivial, irreducible, unitary representation~$\pi$ of~$N$,
$\pi(\tilde D)$ is injective on smooth vectors;
this is true because we can write:
$$  \pi(\tilde D)=\pi(2C L^d -D)
=2C\pi\left(I-\frac 12 D {(C L^d)}^{-1}\right)
{\pi( L)}^d
,
$$
and $I-\frac 12 D {(C L^d)}^{-1}$ is invertible
and $L$ a Rockland operator.
\end{proof}

Before proving Proposition~\ref{prop_gen_hula},
let us define some notation.
We equip the two-step nilpotent Lie algebra  $\Nc$ 
with an Euclidean product 
such that $K$ acts orthogonally.
$K$ stabilises the centre $\Zc$ of $\Nc$, 
and its orthogonal complement $\Vc=\Zc^\perp$.
The decomposition $\Nc=\Vc\oplus \Zc$ endows $N$ with a structure of graded Lie group.
$Q=\dim \Vc+2\dim \Zc$ is the homogeneous dimension of the group.
For the symmetrisation mapping,
we assume that the basis ${(E_i)}_{i=1}^p$ is given as a basis
${(E_i)}_{i=1}^{p_1}$ of $\Vc$
completed with a basis ${(E_i)}_{i=p_1}^p$ of $\Zc$.
As the action of $K$ on $\Pc(\Nc)$
respects the degree-graduation in both the $\Zc$ and $\Vc$-variables,
there exist bi-homogeneous Hilbert basis $\{\rho_1,\ldots,\rho_q\}$
in the sense that
each polynomial $\rho_j$ is 
homogeneous in the $\Zc$-variables and in the $\Vc$-variables.
For a bi-homogeneous Hilbert basis $\{\rho_1,\ldots,\rho_q\}$,
we denote by $d_j^{(1)}$ the degree of homogenity of $\rho_j$ in the $\Vc$-variables,
and by $d_j^{(2)}$ the degree of homogenity of $\rho_j$ in the
$\Zc$-variables;
$d_j= d_j^{(1)}+2d_j^{(2)}$ is the degree of homogeneity of the operator $D_{\rho_j}$
for the structure of graded Lie group of $N$.

Let us start the proof of Proposition~\ref{prop_gen_hula}.
We notice that it suffices to show the result for one Hilbert mapping
because of the existence of a polynomial mapping between two Hilbert mappings.
We choose a bi-homogeneous ordered Hilbert basis
$\rho=(\rho_1,\ldots,\rho_q)$ 
with the two following properties. 
First
$\rho_1(\sum_{j=1}^p u_j E_j)= \sum_{j=1}^{p_1} \nn{u_j}^2$.
Second,
the polynomials $\rho_1,\ldots,\rho_{q_1}$ are of even degree of
homogeneity in the $\Vc$-variables
and 
the polynomials $\rho_{q_1+1},\ldots,\rho_q$ are of odd degree of
homogeneity in the $\Vc$-variables.

Let  $m$ be in~$\Sc(\Rb^q)$.
$S$ denotes 
the set of all the sequences 
$\epsilon : \{q_1+1,\ldots,q\}\rightarrow \{0,1\}$.
Using Whitney's Theorem or G.~Schwarz's Theorem,
there exists a family of Schwartz functions 
${(\tilde m_\epsilon)}_{\epsilon\in  S}$,
$ \tilde m_\epsilon\in\Sc(\Rb^q)$  
satisfying for all $(r_1,\ldots,r_q)\in \Rb^q$:
$$
m(r_1,\ldots,r_q)
=
\sum_{\epsilon \in S}
 r^{\epsilon}    \tilde  m_\epsilon(r_1,\ldots,r_{q_1}, r_{q_1+1}^2,\ldots,r_q^2)
,
$$
where we use the notation
$r^\epsilon=r_{q_1+1}^{\epsilon(q_1+1)}\ldots r_q ^{\epsilon(q)}$.

The operator $\tilde D_1 = D_{\rho_1}$ is the sub-Laplacian of~$N$
which is a positive Rockland operator.
By Lemma~\ref{lem_folland_stein}
there exist constants $c_j$, $j=2,\ldots,q$ such that 
\begin{itemize}
\item 
for $j=2,\ldots,q_1$, 
the operator 
$\tilde D_j=-D_{\rho_j}+ c_j D_{\rho_1}^{\frac {d_j} 2}$ 
is a positive Rockland operator on~$N$
\item 
for $j=q_1+1,\ldots,q$, 
the operator
$\tilde D_j=-D_{\rho_j}^2+ c_j D_{\rho_1}^{d_j}$ 
is a positive Rockland operator on~$N$
\end{itemize}
For $r=(r_1,\ldots,r_q)\in \Rb^q$, 
we set ${[A(r)]}_1=r_1$ and:
$$
\begin{array}{rcl@{\quad,\quad}l}
{[A(r)]}_j&=& -r_j+c_jr_1^{d_j/2} 
& j=2,\ldots,q_1\\
{[A(r)]}_j&=& -r_j+c_jr_1^{d_j} 
& j=q_1+1,\ldots,q
\end{array}
.
$$
This defines an application $A:\Rb^q\rightarrow \Rb^q$ 
which is a $C^\infty$-diffeomorphism of $\Rb^q$
and whose Jacobian equals ${(-1)^{q-1}}$ at any point.
Thus if $h$ is in $\Sc(\Rb^q)$ then $h\circ A^{-1}$ is in $\Sc(\Rb^q)$. 

We have:
\begin{equation}
  \label{eq_m_Drho}
m(D_{\rho_1},\ldots,D_{\rho_q})
=
\sum_{\epsilon \in S}
 D_{\rho'}^\epsilon
\tilde m_\epsilon(D_{\rho_1},\ldots,D_{\rho_{q_1}},D_{\rho_{q_1+1}}^2,\ldots,D_{\rho_q}^2)
 \end{equation}
(using the notation
$ D_{\rho'}^\epsilon=D_{\rho_{q_1+1}}^{\epsilon(q_1+1)}\ldots D_{\rho_q }^{\epsilon(q)}$)
and:
\begin{equation}
  \label{eq_mji_Drho}
\tilde m_\epsilon(D_{\rho_1},\ldots,D_{\rho_{q_1}}, D_{\rho_{q_1+1}}^2,\ldots,D_{\rho_q}^2)
=
\tilde m_\epsilon\circ A^{-1} (\tilde D_1,\ldots,\tilde D_q)
\end{equation}
Each operator given by~(\ref{eq_mji_Drho})
is a Schwartz multiplier $\tilde m_\epsilon\circ A^{-1} \in \Sc(\Rb^q)$
of a strongly commutative family of positive Rockland operators $\tilde D_j$, $j=1,\ldots q$.
By \cite[Theorem 5.2]{astengo_diblasio_ricci_08},
it is a convolution operator with a Schwartz kernel 
$M_{\tilde  m_\epsilon\circ A^{-1}, (\tilde D_j)}$.
Because of the expression~(\ref{eq_m_Drho}),
we deduce that 
the operator $m(D_{\rho_1},\ldots,D_{\rho_q})$
is also a convolution operator with a Schwartz kernel
$M_{m,\Dc_\rho}=\sum_{\epsilon \in S} D_{\rho'}^\epsilon M_{\tilde
  m_\epsilon\circ A^{-1}, (\tilde D_j)}$.

The continuity of 
$m\in \Sc(\Rb^q)\mapsto M_{m,\Dc_\rho}\in {\Sc(N)}^K$
is a direct consequence of the following facts:
\begin{itemize}
\item 
by Schwarz-Mather's Theorem,
the mappings 
$m \in \Sc(\Rb^q)\mapsto m_\epsilon\in\Sc(\Rb^{q_1}\times {[0,\infty[}^{q-q_1})$,
$\epsilon\in S$, are continuous
\item
the application $A$ being a $C^\infty$-diffeomorphism of $\Rb^q$
with ${(-1)}^{q-1}$ as  jacobian,
the mapping 
${A^{-1}}^*:h\in\Sc(\Rb^q)\mapsto h\circ A^{-1}\in\Sc(\Rb^q)$ 
is continuous
\item
by \cite[Theorem 5.2]{astengo_diblasio_ricci_08},
the application that maps $m\in \Sc(\Rb^q)$ 
to the kernel $M_{m,(\tilde D_j)}$ of the operator $m(\tilde D_1, \ldots, \tilde D_q)$
is continuous
\end{itemize}

The proof of Proposition~\ref{prop_gen_hula} is thus complete.

\bibliographystyle{alpha}

\bibliography{article}

\begin{thebibliography}{ADR08}

\bibitem[ADR07]{astengo_diblasio_ricci_07}
F.~Astengo, B.~Di\, Blasio, and F.~Ricci.
\newblock Gelfand transforms of polyradial {S}chwartz functions on the
  {H}eisenberg group.
\newblock {\em J. Funct. Anal.}, 251(2):772--791, 2007.

\bibitem[ADR08]{astengo_diblasio_ricci_08}
F.~Astengo, B.~Di\, Blasio, and F.~Ricci.
\newblock Gelfand pairs on the heisenberg group and schwartz functions.
\newblock {\em to appear}, 2008.
\newblock \url{http://arxiv.org/abs/0805.3809v1}.

\bibitem[BJR90]{benson_jenkins_ratcliff_1990}
C.~Benson, J.~Jenkins, and G.~Ratcliff.
\newblock On {G}elfand pairs associated with solvable {L}ie groups.
\newblock {\em Trans. Amer. Math. Soc.}, 321(1):85--116, 1990.

\bibitem[Fer07]{ferrari}
F.~Ferrari\, R.
\newblock The topology of the spectrum for gelfand pairs on lie groups.
\newblock {\em Bull. Un. Mat. It.}, 10:569--579, 2007.
\newblock \url {http://arxiv.org/abs/0706.0708v1}.

\bibitem[Fol75]{folland_1975}
G.~B. Folland.
\newblock Subelliptic estimates and function spaces on nilpotent {L}ie groups.
\newblock {\em Ark. Mat.}, 13(2):161--207, 1975.

\bibitem[FS82]{folland_stein_bk}
G.~B. Folland and E.~M. Stein.
\newblock {\em Hardy spaces on homogeneous groups}, volume~28 of {\em
  Mathematical Notes}.
\newblock Princeton University Press, Princeton, N.J., 1982.

\bibitem[Gel80]{geller_80}
D.~Geller.
\newblock Fourier analysis on the heisenberg group. i. schwartz space.
\newblock {\em J. Funct. Anal.}, 36(2):205--254, 1980.

\bibitem[Gel83]{geller_1983}
D.~Geller.
\newblock Liouville's theorem for homogeneous groups.
\newblock {\em Comm. Partial Differential Equations}, 8(15):1665--1677, 1983.

\bibitem[GW98]{goodman_wallach_bk}
R.~Goodman and N.~R. Wallach.
\newblock {\em Representations and invariants of the classical groups},
  volume~68 of {\em Encyclopedia of Mathematics and its Applications}.
\newblock Cambridge University Press, Cambridge, 1998.

\bibitem[Hel84]{helgason_bk2}
S.~Helgason.
\newblock {\em Groups and geometric analysis}, volume 113 of {\em Pure and
  Applied Mathematics}.
\newblock Academic Press Inc., Orlando, FL, 1984.

\bibitem[Hel99]{helgason_bk3}
S.~Helgason.
\newblock {\em The {R}adon transform}, volume~5 of {\em Progress in
  Mathematics}.
\newblock Birkh\"auser Boston Inc., Boston, MA, second edition, 1999.

\bibitem[Hul84]{hulanicki_84}
A.~Hulanicki.
\newblock A functional calculus for {R}ockland operators on nilpotent {L}ie
  groups.
\newblock {\em Studia Math.}, 78(3):253--266, 1984.

\bibitem[Mat77]{mather}
J.~N. Mather.
\newblock Differentiable invariants.
\newblock {\em Topology}, 16(2):145--155, 1977.

\bibitem[Sch75]{schwarz_1975}
G.~W. Schwarz.
\newblock Smooth functions invariant under the action of a compact {L}ie group.
\newblock {\em Topology}, 14:63--68, 1975.

\bibitem[Ste93]{stein_bible}
E.~M. Stein.
\newblock {\em Harmonic analysis: real-variable methods, orthogonality, and
  oscillatory integrals}, volume~43 of {\em Princeton Mathematical Series}.
\newblock Princeton University Press, Princeton, NJ, 1993.

\end{thebibliography}

\end{document}